
\documentclass{ws-ijbc}
\usepackage{ws-rotating}     
\usepackage{graphicx}
\usepackage{epstopdf}
\begin{document}

\catchline{}{}{}{}{} 

\markboth{Y. Wu}{Weak bi-center and critical period bifurcations of a quintic system}

\title{Weak bi-center and critical period bifurcations of a $Z_2$-Equivariant quintic system\footnote{This research is partially supported by National Natural Science Foundation of China (No. 11101126).}}

\author{Yusen Wu}

\address{School of Statistics, Qufu Normal University\\
Qufu 273165, Shandong, PR China\\
wuyusen621@126.com}

\maketitle
\begin{history} \received{(Apr. 7, 2020)}
\end{history}

\begin{abstract}
With the help of computer algebra system-\textsc{Mathematica}, this paper considers the weak center problem and local critical periods for bi-center of a $Z_2$-Equivariant quintic system with eight parameters. The order of weak bi-center is identified and the exact maximum number of bifurcation of critical periods generated from the bi-center is given via the combination of symbolic calculation and numerical analysis.
\end{abstract}

\keywords{Weak bi-center; Critical period bifurcation; Period constant; $Z_2$-Equivariant quintic system}

\section{Introduction}

For the following center-focus type planar differential system
\begin{equation}\label{1.1}
\frac{dx}{dt}=-y+h.o.t.,\ \frac{dy}{dt}=x+h.o.t.,
\end{equation}
the center and isochronous center problems as well as bifurcation of limit cycles have attracted much attention from mathematicians. However, another two important topics worthy of investigation for system \eqref{1.1} are identifying the order of weak center and the number of bifurcation of critical periods. A global study of the number of critical points of the period is a very difficult question. However, a simpler version is the local problem of the number of critical periods which can appear by perturbation of a system in the neighborhood of a center. This question is attacked by computing the Taylor series of the period function in the neighborhood of the center and further by determining the order of its first non-constant term. The involved computations are purely algorithmic. When it is performed on a polynomial family of vector fields the coefficients of the period function are polynomials in the coefficients of the system.

Chicone and Jacobs [Chicone \& Jacobs, 1989] introduced the notion of bifurcation of local critical periods by analogy with the method of Bautin [Bautin, 1954] and proved that at most two critical period bifurcations can occur in a quadratic system. Later on, Lin and Li [Lin \& Li, 1991] proposed a complex method to investigate weak centers and local critical periods and solved the bifurcation of local critical periods for the cubic complex system without quadratic terms, which was also studied in [Romanovski \& Han, 2003; Rousseau \& Toni, 1993]. Rousseau and Toni studied the local bifurcation of critical periods of periodic orbits in the neighborhood of a nondegenerate center of a vector field with a homogeneous nonlinearity of the third degree and the reduced Kukles system in [Rousseau \& Toni, 1993] and [Rousseau \& Toni, 1997], respectively. References [Chicone \& Jacobs, 1989] and [Cherkas et al, 1997] obtained the result that $k$ critical points can bifurcate from a weak center of order $k$. Zhang et al. [Zhang et al, 2000] discussed weak center conditions and bifurcation of critical periods in a special reversible cubic systems. Du [Du, 2004] studied local bifurcations of critical periods in the neighborhood of a nondegenerate center of a Li\'{e}nard system. Gasull and Zhao [Gasull \& Zhao, 2008] concerned with the study of the number of critical periods of perturbed isochronous centers. Cima et al. [Cima et al, 2008] got some lower bounds for the number of critical periods of families of centers which are perturbations of the linear one. Chen and Zhang [Chen \& Zhang, 2009] decomposed algebraic sets, stratum by stratum, into a union of constructible sets with Sylvester resultants, so as to simplify the procedure of elimination. Yu et al. [Yu et al, 2010] considered the critical periods of third-order planar Hamiltonian systems. Li et al. [Li et al, 2018] studied bi-center problem and bifurcation of limit cycles from nilpotent singular points in $Z_2$-equivariant cubic vector fields. Li et al. [Li et al, 2020a] studied complex isochronous center problem for cubic complex planar vector fields, which are assumed to be $Z_2$-equivariant with two symmetric centers. Li et al. [Li et al, 2020b] investigated complete integrability and linearizability of cubic $Z_2$ systems with two non-resonant and elementary singular points.

We remind the differential system with $Z_2$-equivariant symmetry, i.e., systems with unchanged phase portraits after a rotation on the angle $\pi$ around a point $P$. Assume that $A$ and $B$ are singular points of a differential system which is $Z_2$-equivariant with respect to the middle point of the line segment $AB$. We say that such system has a \textit{bi-center} at points $A$ and $B$ if both $A$ and $B$ are singular points of the center type. Du et al. [Du et al, 2013] devoted to study a center problem and a weak center problem for cubic systems in $Z_4$-equivariant vector fields. Chen et al. [Chen et al, 2014] considered the weak center conditions and local critical periods for a $Z_2$-equivariant cubic system with eleven center conditions at the bi-center. Romanovski et al. [Romanovski et al, 2017] investigated the existence of a bi-center for a subfamily of a planar $Z_2$-equivariant differential quintic system of the form
\begin{equation}\label{1.2}
\frac{dx}{dt}=X_1(x,y)+X_5(x,y),\ \frac{dy}{dt}=Y_1(x,y)+Y_5(x,y),
\end{equation}
where $X_i(x,y),Y_i(x,y)(i=1,5)$ are homogeneous polynomials of degree $i$ in the variables $x$ and $y$ and \eqref{1.2} has two weak foci or centers at the points $(\pm1,0)$. Four families of system \eqref{1.2} possessing bi-centers were found and the authors also have shown that they are not isochronous. The reason for choosing homogeneous polynomials of degree five in \eqref{1.2} (rather than polynomials of degree four) is to assure the existence of $Z_2$-equivariant symmetry with respect to origin, which can appear only if the polynomials defining the system have just odd degree monomials. So, if replacing in \eqref{1.2} $X_5(x,y)$ and $Y_5(x,y)$ with homogeneous polynomials of degree four, then the system cannot have a bi-center at $(\pm1,0)$, and if we add homogeneous perturbations of degree three, then the study becomes computationally unfeasible.

The main propose of this paper is to continue the investigation of system \eqref{1.2}. Specifically speaking, we study the conditions on the parameters for such system to have a weak bi-center of maximum order and the most bifurcation of critical periods.

The paper is organized as follows. In Section 2 we show the computational method of period constants. In Section 3 we state and prove our main results. In the last section, we give a short conclusion.

\section{Preliminary knowledge}

Consider an autonomous two-dimensional system of the form
\begin{equation}\label{2.1}\begin{array}{l}
\frac{dx}{dt}=-y+\sum\limits_{k=2}^\infty X_k(x,y,\lambda),\\
\frac{dy}{dt}=x+\sum\limits_{k=2}^\infty Y_k(x,y,\lambda),
\end{array}\end{equation}
where $X_k(x,y,\lambda),Y_k(x,y,\lambda)$ are homogeneous
polynomials of degree $k$ of $x,y$ and
$\lambda\in\Lambda\subset\mathbb{R}^s$ is a vector of parameters.
Assume $X_k(x,y,\lambda),Y_k(x,y,\lambda)$ are analytic in a
neighborhood of the origin which is a center type singularity of
system \eqref{2.1}. Under the polar coordinates
$x=r\cos\theta,y=r\sin\theta$, system \eqref{2.1} takes the form
\begin{equation}\label{2.2}
\frac{dr}{d\theta}=\frac{\sum\limits_{k=2}^\infty
r^k\varphi_{k+1}(\theta)}{1+\sum\limits_{k=2}^\infty
r^{k-1}\psi_{k+1}(\theta)},
\end{equation}
where
\begin{equation}\begin{array}{l}
\varphi_{k+1}(\theta)=\cos\theta X_k(\cos\theta,\sin\theta)+\sin\theta Y_k(\cos\theta,\sin\theta),\\
\psi_{k+1}(\theta)=\cos\theta Y_k(\cos\theta,\sin\theta)-\sin\theta
X_k(\cos\theta,\sin\theta),
\end{array}\end{equation}
$k=2,3,\cdots$ and
\begin{equation}
\frac{d\theta}{dt}=1+\sum\limits_{k=2}^\infty
r^{k-1}\psi_{k+1}(\theta).
\end{equation}
Let $r(\theta,h)$ be the solution of system \eqref{2.2} associated
with the initial condition $r|_{\theta=0}=h$ (it corresponds to the
initial point $(h,0)$ in the rectangular coordinate system). For a
sufficiently small real constant $h$ the period function is defined
by
\begin{equation}
P(h,\lambda)=\int_0^{2\pi}\frac{d\theta}{1+\sum\limits_{k=2}^\infty
r^{k-1}(\theta,h)\psi_{k+1}(\theta)}=2\pi+\sum\limits_{k=1}^\infty
T_kh^k.
\end{equation}
It is known from [Romanovski \& Shafer, 2009] that the period
constants can be also written in the form
\begin{equation}
P(h,\lambda)=2\pi+\sum\limits_{k=1}^\infty p_{2k}(\lambda)h^{2k}.
\end{equation}
The coefficient $p_{2k}$ in the above expression is call the $k$-th
period constant at the origin of the system.

\begin{definition}
Let $\phi(h,\lambda):=P(h,\lambda)-2\pi$. If there exists $k\in\mathbb{N},\lambda_*\in\mathbb{R}^n$ such that
\begin{equation}
\phi(0,\lambda_*)=\phi'(0,\lambda_*)=\cdots=\phi^{(2k+1)}(0,\lambda_*)=0,\phi^{(2k+2)}(0,\lambda_*)\neq0,
\end{equation}
or equivalently
\begin{equation}
p_2(\lambda_*)=p_4(\lambda_*)=\cdots=p_{2k}(\lambda_*)=0,p_{(2k+2)}(\lambda_*)\neq0,
\end{equation}
the origin of system \eqref{2.1} is called a $k$-order weak center at the parameter $\lambda_*$. If $k=0$, the origin is called a strong center. If $p_{2k}(\lambda_*)=0$ for all positive integer $k$, then the origin is called an isochronous center.
\end{definition}

\begin{definition}
Let the origin be a weak or isochronous center of system \eqref{2.1} corresponding to the parameter $\lambda_*\in\Lambda$. It is said that $k$ local critical periods bifurcate from the origin if there is $\varepsilon_0>0$ such that for every $0<\varepsilon<\varepsilon_0$ and every sufficiently small neighborhood $W$ of $\lambda_*$, there is a $\lambda_1\in W$ such that $P'(h,\lambda_1)=0$ has $k$ solutions in $U=(0,\varepsilon)$.
\end{definition}

By means of transformation
\begin{equation}
z=x+iy,\ w=x-iy,\ T=it,\ i=\sqrt{-1},
\end{equation}
system \eqref{2.1} can be transformed into the following complex
system
\begin{equation}\label{2.8}\begin{array}{l}
\frac{dz}{dT}=z+\sum\limits_{k=2}^{\infty}Z_k(z,w,\lambda)=Z(z,w,\lambda),\\
\frac{dw}{dT}=-w-\sum\limits_{k=2}^{\infty}W_k(z,w,\lambda)=-W(z,w,\lambda),
\end{array}\end{equation}
with
\begin{equation}
Z_k(z,w,\lambda)=\sum\limits_{\alpha+\beta=k}a_{\alpha\beta}(\lambda)z^\alpha
w^\beta,\
W_k(z,w,\lambda)=\sum\limits_{\alpha+\beta=k}b_{\alpha\beta}(\lambda)w^\alpha
z^\beta.
\end{equation}
It is obvious that the coefficients of system \eqref{2.8} satisfy
the conjugate condition, i.e.,
\begin{equation}
\overline{a_{\alpha\beta}}=b_{\alpha\beta},\ \alpha\geq0,\
\beta\geq0,\ \alpha+\beta\geq2.
\end{equation}
We call that systems \eqref{2.1} and \eqref{2.8} are concomitant.

\begin{lemma}
([Amelkin et al, 1982]) For system \eqref{2.8}, we can derive
uniquely the following formal series:
\begin{equation}
\xi=z+\sum\limits_{k+j=2}^\infty c_{kj}z^kw^j,\
\eta=w+\sum\limits_{k+j=2}^\infty d_{kj}w^kz^j,
\end{equation}
where $c_{k+1,k}=d_{k+1,k}=0,k=1,2,\cdots$, such that
\begin{equation}
\frac{d\xi}{dT}=\xi+\sum\limits_{j=1}^{\infty}p_j\xi^{j+1}\eta^j,\
\frac{d\eta}{dT}=-\eta-\sum\limits_{j=1}^{\infty}q_j\eta^{j+1}\xi^j.
\end{equation}
\end{lemma}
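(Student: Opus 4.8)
The statement is the classical existence-and-uniqueness result for the normal form of a complex center-type system, so the plan is to construct the series $\xi,\eta$ term by term and show the normalization $c_{k+1,k}=d_{k+1,k}=0$ pins them down uniquely. First I would write $\xi = z + \sum_{k+j\ge 2} c_{kj}z^k w^j$ and impose that $\dot\xi = \xi + \sum_{j\ge1}p_j\xi^{j+1}\eta^j$ along trajectories of \eqref{2.8}. Differentiating $\xi$ with respect to $T$ using \eqref{2.8} and collecting terms of total degree $m$ in $(z,w)$, one obtains a recursive relation of the schematic form
\begin{equation}
(k-j-1)\,c_{kj} = \big(\text{polynomial in the } a_{\alpha\beta}, b_{\alpha\beta} \text{ and the } c_{k'j'} \text{ with } k'+j' < k+j\big) + (\text{contribution of } p_\ell),
\end{equation}
where the $p_\ell$-term is present only on the resonant diagonal $k=j+1$. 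The key algebraic observation is that the coefficient $k-j-1$ on the left is nonzero precisely when $(k,j)$ is \emph{off} the diagonal $k=j+1$; in that case $c_{kj}$ is uniquely determined by lower-order data, and we simply set $p_\ell$ aside. On the diagonal $k=j+1$ the left side vanishes identically, so $c_{k+1,k}$ is free — we \emph{choose} $c_{k+1,k}=0$ as the normalization — and the same degree-$m$ equation then \emph{defines} $p_j$ as whatever residual the lower-order terms produce. An identical argument applied to $\eta$ (with the roles of $z,w$ and the sign of the equation swapped) gives the $d_{kj}$ and the $q_j$. Doing this inductively on $m=k+j$ starting from $m=2$ establishes existence of the formal series and of the constants $p_j,q_j$.

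For uniqueness I would argue that the recursion leaves no freedom: off the diagonal $c_{kj}$ is forced, and on the diagonal the imposed condition $c_{k+1,k}=0$ removes the only free parameter, after which $p_j$ is determined. Hence the triple $(\xi,\eta,\{p_j,q_j\})$ with the stated normalization is unique. It is worth recording the conjugacy remark that, because of $\overline{a_{\alpha\beta}}=b_{\alpha\beta}$, the construction respects the involution $z\leftrightarrow\bar w$, so $\eta$ is obtained from $\xi$ by conjugation and $q_j=\bar p_j$; this is what ultimately ties the $p_j$ back to the real period constants $p_{2k}(\lambda)$ of the concomitant real system \eqref{2.1}, though the lemma itself only asserts the formal normal form.

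The only real obstacle is bookkeeping: one must check that at each degree $m$ the "lower-order data" invoked in the recursion genuinely involves only coefficients $c_{k'j'},d_{k'j'}$ with $k'+j'<m$ (so the induction is well-founded) and that the diagonal equation does not also constrain some off-diagonal coefficient — i.e. that the resonant monomials $\xi^{j+1}\eta^j$ contribute only to the diagonal. This is a consequence of the fact that the linear part $z\,\partial_z - w\,\partial_w$ acts on the monomial $z^k w^j$ with eigenvalue $k-j-1$ (for the $\xi$-equation), which vanishes exactly on $k=j+1$; once this eigenvalue computation is in hand the rest is routine term-matching. I would therefore lead with the eigenvalue/homological-equation computation, then run the induction, and finally note uniqueness and the conjugacy symmetry.
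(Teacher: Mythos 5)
Your homological-equation argument is correct and is exactly the standard construction behind this cited result: the paper itself gives no proof (it quotes [Amelkin et al, 1982]), but the explicit recursions \eqref{2.15}--\eqref{2.16} of Theorem \ref{t2.1}, with the factor $1/(j+1-k)$ off the resonant diagonal $k=j+1$ and the $p'_j,q'_j$ defined precisely on that diagonal, are the concrete realization of your eigenvalue computation for $z\,\partial_z-w\,\partial_w$. Your uniqueness and conjugacy remarks ($d_{kj}=\overline{c_{kj}}$, $q_j=\overline{p_j}$ under $\overline{a_{\alpha\beta}}=b_{\alpha\beta}$) are likewise consistent with the paper's setup, so nothing further is needed.
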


\begin{definition}
The quantity $\tau_k=p_k+q_k,k=1,2,\cdots$ is called the $k$-th
\textit{complex period constant} at the origin of system
\eqref{2.8}. A complex center is called a \textit{complex
isochronous center} if all $\tau_k$ vanish.
\end{definition}

The method to compute $\tau_k$ is shown in the following theorems:

\begin{theorem}\label{t2.1}
([Liu \& Huang, 2003]) For system \eqref{2.8}, we can derive uniquely
the following formal series:
\begin{equation}
f(z,w)=z+\sum\limits_{k+j=2}^{\infty}c'_{kj}z^kw^j,\
g(z,w)=w+\sum\limits_{k+j=2}^{\infty}d'_{kj}w^kz^j,
\end{equation}
where $c'_{k+1,k}=d'_{k+1,k}=0,k=1,2,\cdots,$ such that
\begin{equation}\frac{df}{dT}=f(z,w)+\sum\limits_{j=1}^{\infty}p'_jz^{j+1}w^j,\ \frac{dg}{dT}=-g(z,w)-\sum\limits_{j=1}^{\infty}q'_jw^{j+1}z^j,
\end{equation}
and when $k-j-1\neq0,c'_{kj}$ and $d'_{kj}$ are determined by the
following recursive formulae:
\begin{equation}\label{2.15}\begin{array}{l}
c'_{kj}={1\over{j+1-k}}\sum\limits_{\alpha+\beta=3}^{k+j+1}[(k-\alpha+1)a_{\alpha,\beta-1}-(j-\beta+1)b_{\beta,\alpha-1}]c'_{k-\alpha+1,j-\beta+1},\\
d'_{kj}={1\over{j+1-k}}\sum\limits_{\alpha+\beta=3}^{k+j+1}[(k-\alpha+1)b_{\alpha,\beta-1}-(j-\beta+1)a_{\beta,\alpha-1}]d'_{k-\alpha+1,j-\beta+1},
\end{array}\end{equation}
and for any positive integer $j,p'_j$ and $q'_j$ are determined by
the following recursive formulae:
\begin{equation}\label{2.16}\begin{array}{l}
p'_j=\sum\limits_{\alpha+\beta=3}^{2j+2}[(j-\alpha+2)a_{\alpha,\beta-1}-(j-\beta+1)b_{\beta,\alpha-1}]c'_{j-\alpha+2,j-\beta+1},\\
q'_j=\sum\limits_{\alpha+\beta=3}^{2j+2}[(j-\alpha+2)b_{\alpha,\beta-1}-(j-\beta+1)a_{\beta,\alpha-1}]d'_{j-\alpha+2,j-\beta+1}.
\end{array}\end{equation}
In expressions \eqref{2.15} and \eqref{2.16}, we have let
$c'_{10}=d'_{10}=1,c'_{01}=d'_{01}=0,$ and if $\alpha<0$ or
$\beta<0$, let
$a_{\alpha\beta}=b_{\alpha\beta}=c'_{\alpha\beta}=d'_{\alpha\beta}=0.$
\end{theorem}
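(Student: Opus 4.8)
The plan is to establish Theorem \ref{t2.1} by the method of undetermined coefficients, comparing the Taylor coefficients of the two normalizing identities monomial by monomial and organizing the bookkeeping by the total degree $m=k+j$. First I would substitute the prescribed series $f(z,w)=z+\sum_{k+j\ge2}c'_{kj}z^kw^j$ into the identity $\frac{df}{dT}=f_z\frac{dz}{dT}+f_w\frac{dw}{dT}$ and use the right-hand sides of system \eqref{2.8} to rewrite it as $\frac{df}{dT}=\bigl(zf_z-wf_w\bigr)+\Bigl(f_z\sum_{k=2}^\infty Z_k-f_w\sum_{k=2}^\infty W_k\Bigr)$. Here $zf_z-wf_w=z+\sum_{k+j\ge2}(k-j)c'_{kj}z^kw^j$, while in the second (nonlinear) bracket the coefficient of $z^kw^j$ involves only those $c'_{k'j'}$ with $k'+j'<k+j$, since multiplying $f_z$ or $f_w$ by a homogeneous term $Z_m$ or $W_m$ with $m\ge2$ strictly raises the total degree. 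This is exactly what makes a recursion on $m$ possible, with base data $c'_{10}=1$, $c'_{01}=0$.

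Next, equating this expression with the required normal form $\frac{df}{dT}=f(z,w)+\sum_{j\ge1}p'_jz^{j+1}w^j$ and reading off the coefficient of $z^kw^j$, I would obtain an identity of the shape $(k-j)c'_{kj}+N_{kj}=c'_{kj}+\bigl(p'_j\text{ when }k=j+1,\text{ and }0\text{ otherwise}\bigr)$, where $N_{kj}$ denotes the coefficient of $z^kw^j$ in $f_z\sum_{k=2}^\infty Z_k-f_w\sum_{k=2}^\infty W_k$. Expanding that nonlinear part explicitly, then performing the index shift $\beta\mapsto\beta-1$ on the contribution of $\sum Z_k$ and the relabelling $\alpha\leftrightarrow\beta$ followed by $\alpha\mapsto\alpha-1$ on the contribution of $\sum W_k$, one checks that $N_{kj}$ equals the double sum displayed on the right of \eqref{2.15}. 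Consequently, when $k-j-1\ne0$ the identity becomes $(k-j-1)c'_{kj}=-N_{kj}$, which rearranges to \eqref{2.15}; and when $k=j+1$ the two $c'_{kj}$ terms cancel, so, invoking the normalization $c'_{k+1,k}=0$, the surviving identity reduces to $p'_j=N_{j+1,j}$, i.e. \eqref{2.16}. Strong induction on $m$ then produces all $c'_{kj}$ and all $p'_j$, and the construction is manifestly unique: at each degree the non-resonant coefficients are solved for outright, the resonant coefficients $c'_{k+1,k}$ are fixed to $0$ by hypothesis, and the $p'_j$ are then forced. The formulae for $g$, $d'_{kj}$, $q'_j$ follow by the same computation applied to $\frac{dg}{dT}$, or, more briefly, by the involution of \eqref{2.8} that swaps $z\leftrightarrow w$, $a_{\alpha\beta}\leftrightarrow b_{\alpha\beta}$ and reverses $T$, which interchanges the two equations and carries $c'_{kj}\mapsto d'_{kj}$, $p'_j\mapsto q'_j$.

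The delicate point, and the only real obstacle, is the resonant case $k=j+1$: the linear operator $z\partial_z-w\partial_w-1$ annihilates the monomial $z^{j+1}w^j$, so the degree-$(2j+1)$ equation cannot be solved for $c'_{j+1,j}$ and instead imposes a compatibility condition. One must verify that the $c'_{j+1,j}$ contributions cancel cleanly between the two sides so that this condition isolates precisely $p'_j$, and that the choice $c'_{j+1,j}=0$ is compatible with every higher-order equation; it is, since that monomial can re-enter $N_{k'j'}$ for $k'+j'>2j+1$ only multiplied by coefficients $a_{\alpha,\beta-1}$, $b_{\beta,\alpha-1}$ of degree $\ge2$, never with a spurious nonzero linear weight. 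A secondary, purely mechanical, difficulty is to make the summation ranges $\alpha+\beta=3,\dots,k+j+1$ and the subscripts $a_{\alpha,\beta-1}$, $b_{\beta,\alpha-1}$, $c'_{k-\alpha+1,j-\beta+1}$ agree with the displayed formulae exactly. There is, however, no analytic subtlety: whenever $j+1-k\ne0$ it is a nonzero integer, so the recursion is well defined and the series are bona fide formal power series.
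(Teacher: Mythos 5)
Your proof is correct. Note that the paper itself gives no proof of this statement: it is imported verbatim from [Liu \& Huang, 2003] as preliminary background, so there is nothing in the paper to compare against. Your argument is the standard normal-form derivation one would find in that reference: writing $\frac{df}{dT}=f_z\dot z+f_w\dot w$, splitting off the semisimple linear part $zf_z-wf_w$, observing that the nonlinear contribution $N_{kj}$ to the coefficient of $z^kw^j$ depends only on coefficients of strictly lower total degree (which grounds the induction on $m=k+j$), solving $(k-j-1)c'_{kj}=-N_{kj}$ in the non-resonant case, and reading off $p'_j=N_{j+1,j}$ in the resonant case $k=j+1$ where the operator $z\partial_z-w\partial_w-1$ kills $z^{j+1}w^j$ and the normalization $c'_{j+1,j}=0$ pins down uniqueness. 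I checked your index bookkeeping: with $\alpha=\alpha'$, $\beta=\beta'+1$ for the $f_zZ$ term and $\alpha=\beta'+1$, $\beta=\alpha'$ for the $f_wW$ term, the coefficient $N_{kj}$ does reduce exactly to the displayed sum over $\alpha+\beta=3,\dots,k+j+1$, and the $j=k-1$ specialization reproduces \eqref{2.16}. The symmetry argument for $d'_{kj}$ and $q'_j$ is also fine.
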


The relations between $p_j,q_j$ and $p'_j,q'_j\;(j=1,2,\cdots)$ are
as follows:

\begin{theorem}
([Liu \& Huang, 2003]) Let $p_0=q_0=p'_0=q'_0=0$. If there exists a
positive integer $k$, such that
\begin{equation}
p_0=q_0=p_1=q_1=\cdots=p_{k-1}=q_{k-1}=0,
\end{equation}
then
\begin{equation}
p'_0=q'_0=p'_1=q'_1=p'_{k-1}=q'_{k-1}=0,p_k=p'_k,q_k=q'_k
\end{equation} per contra, it holds as well.
\end{theorem}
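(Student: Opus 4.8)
We sketch a proof by induction on the total degree $m$, comparing the two constructions equation by equation. Write the series $(\xi,\eta)$ furnished by the Lemma above as $\xi=z+\phi$, $\eta=w+\psi$, and the series of Theorem \ref{t2.1} as $f=z+\phi^{*}$, $g=w+\psi^{*}$; all four are tangent to the identity. Each series is produced by requiring that, degree by degree, $(z\partial_z-w\partial_w-1)\phi$ equal a fixed expression built from the vector field coefficients $a_{\alpha\beta},b_{\alpha\beta}$ and from the coefficients of $\phi,\psi$ of strictly lower degree, plus a residual term: for the first series this residual is $\sum_j p_j\,\xi^{j+1}\eta^j$, re-expanded in $z,w$ through $\xi=z+\phi$, $\eta=w+\psi$, whereas for the series of Theorem \ref{t2.1} it is $\sum_j p'_j\,z^{j+1}w^j$. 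The fixed non-residual part is the same functional in both cases; the operator $z\partial_z-w\partial_w-1$ is invertible on every non-resonant monomial $z^iw^j$ with $i\neq j+1$; and on the resonant monomial $z^{j+1}w^j$ the transformation coefficient is prescribed to be $0$, the resulting obstruction being recorded as $p_j$ (resp.\ $p'_j$). The $\eta$- and $g$-series, and the constants $q_j,q'_j$, are governed by the mirror construction obtained via $z\leftrightarrow w$, $a_{\alpha\beta}\leftrightarrow b_{\alpha\beta}$, so it suffices to argue with the $p$'s.

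The crucial point is a degree count. Since $\xi^{j+1}\eta^j$ and $z^{j+1}w^j$ each have lowest total degree $2j+1$, the residual enters the degree-$m$ equation only through those $p_j$ (resp.\ $p'_j$) with $2j+1\le m$; moreover the degree-$m$ part of $\sum_j p_j\,\xi^{j+1}\eta^j$ is $p_{(m-1)/2}\,z^{(m+1)/2}w^{(m-1)/2}$ when $m$ is odd, together with terms each carrying a factor $p_j$ with $2j+1<m$ and a coefficient built from $\phi,\psi$. Hence, under the hypothesis $p_0=q_0=\cdots=p_{k-1}=q_{k-1}=0$, for every $m\le 2k+1$ the first residual at degree $m$ reduces to $p_k z^{k+1}w^k$ if $m=2k+1$ and to $0$ otherwise — precisely the form of the residual in the scheme of Theorem \ref{t2.1}, with $p_k$ in the role of $p'_k$.

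Granting this, the induction is immediate. Assume $\phi,\psi$ and $\phi^{*},\psi^{*}$ agree in all coefficients of total degree $<m$, and $p_j=p'_j$, $q_j=q'_j$ for all $j$ with $2j+1<m$ (vacuous for $m=2$). At degree $m$ the non-residual parts of the two equations coincide, since they depend only on the already-matched lower-order data and the fixed vector field; by the degree count and the vanishing hypothesis the residual parts coincide as well, except at the resonant monomial when $m=2k+1$ is odd. Extracting the non-resonant components forces $\phi,\psi$ and $\phi^{*},\psi^{*}$ to agree at degree $m$ too (the resonant coefficient being $0$ in both); extracting the resonant component of the degree-$(2k+1)$ equation, whose left-hand side vanishes identically, forces $p_k=p'_k$, and the mirror argument forces $q_k=q'_k$. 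Taking $m=2k+1$: the hypothesis $p_0=q_0=\cdots=p_{k-1}=q_{k-1}=0$ propagates through the odd-degree steps to give $p'_j=p_j=0$ and $q'_j=q_j=0$ for $j\le k-1$, and then $p_k=p'_k$, $q_k=q'_k$. The converse ("per contra") is obtained by running the same comparison from the other side: if $p'_0=q'_0=\cdots=p'_{k-1}=q'_{k-1}=0$, the residual in the scheme of Theorem \ref{t2.1} vanishes below degree $2k+1$, the identical degree-by-degree matching applies, and it forces $p_j=q_j=0$ for $j\le k-1$ together with $p_k=p'_k$, $q_k=q'_k$.

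The main obstacle is precisely the degree count above. Without the vanishing hypothesis the residual $\sum_{j\le k-1}p_j\,\xi^{j+1}\eta^j$ would, through the expansions $\xi^{j+1}\eta^j=(z+\phi)^{j+1}(w+\psi)^j$, feed back into the determination of the coefficients $c_{ij},d_{ij}$ at every degree $\le 2k+1$, so that the first recursion would genuinely differ from the explicit recursion \eqref{2.15}--\eqref{2.16}; it is the vanishing $p_0=\cdots=p_{k-1}=0$ that decouples the two recursions. Making this rigorous — tracking exactly which coefficients enter the degree-$m$ equation, so the decoupling is exact — is the technical heart of the argument; the remainder is merely the invertibility of $z\partial_z-w\partial_w-1$ off the resonant monomials and the $z\leftrightarrow w$ symmetry.
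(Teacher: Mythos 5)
The paper offers no proof of this statement; it is quoted verbatim from [Liu \& Huang, 2003], so there is nothing internal to compare your argument against. On its own merits your proposal is correct and is essentially the standard normal-form comparison argument: both schemes solve the same homological equation $(z\partial_z-w\partial_w-1)\phi_m=(\text{known lower-order data})+(\text{residual})$ degree by degree, the two residuals $\sum_j p_j\xi^{j+1}\eta^j$ and $\sum_j p'_jz^{j+1}w^j$ first differ only through terms carrying a factor $p_j$ with $2j+1<m$, and the hypothesis kills exactly those terms, so the recursions coincide through degree $2k+1$ and the resonant obstructions $p_k$ and $p'_k$ agree. One point worth making explicit to tighten the ``per contra'' direction: at each odd degree $m=2j+1$ the resonant component yields $p_j=p'_j$ \emph{before} any vanishing hypothesis is invoked at that degree (both equal minus the resonant coefficient of the shared non-residual part), so assuming either $p_j=0$ or $p'_j=0$ forces both, which is precisely what lets the induction proceed symmetrically in the two directions. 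The remaining gap you flag yourself --- verifying that the degree-$m$ part of $(z+\phi)^{j+1}(w+\psi)^j-z^{j+1}w^j$ involves only coefficients of $\phi,\psi$ of degree strictly less than $m$ (so no hidden feedback of $\phi_m$ into its own equation except through $p_0=0$) --- is routine bookkeeping and does not affect the validity of the argument.
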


In [Liu et al, 2008] it is shown that the first nonzero complex
period constant $\tau_k$ at the origin of system \eqref{2.8} and the
first nonzero period constant $p_{2k}$ of system \eqref{2.1} satisfy
the relation
\begin{equation}
p_{2k}=-\pi\tau_k.
\end{equation}

\section{Weak centers and local critical periods}

In [Romanovski et al, 2017] the authors investigated the existence of bi-centers in family \eqref{1.2}. After a change of coordinates, the following normal form for system \eqref{1.2} was found
\begin{equation}\begin{array}{l}\label{3.1}
\frac{dx}{dt}=-(a_1+1)y+a_1x^4y+a_2x^3y^2+a_3x^2y^3+a_4xy^4+a_5y^5,\\
\frac{dy}{dt}=-\frac{1}{4}x-a_6y+\frac{1}{4}x^5+a_6x^4y+a_7x^3y^2+a_8x^2y^3+a_9xy^4+a_{10}y^5,
\end{array}\end{equation}
where $a_i\in\mathbb{R},i=1,\cdots,10$. Since system  \eqref{3.1} is $Z_2$-equivariant, the existence of a weak bi-center at the points $(\pm1,0)$ follows from the existence of a weak bi-center at the point $(1,0)$.
Moreover, to compute the period constants for system \eqref{3.1} it is necessary to move the singular point $(1,0)$ to the origin. Applying the transformation
\begin{equation}
\tilde{x}=x-1,\ \tilde{y}=y,
\end{equation}
system \eqref{3.1} is changed into the system
\begin{equation}\begin{array}{l}
\frac{dx}{dt}=-y+4a_1xy+a_2y^2+6a_1x^2y+3a_2xy^2+a_3y^3+4a_1x^3y+3a_2x^2y^2+2a_3xy^3\\
\ \ \ \ \ \ \ +a_4y^4+a_1x^4y+a_2x^3y^2+a_3x^2y^3+a_4xy^4+a_5y^5,\\
\frac{dy}{dt}=x+\frac{5}{2}x^2+4a_6xy+a_7y^2+\frac{5}{2}x^3+6a_6x^2y+3a_7xy^2+a_8y^3+\frac{5}{4}x^4+4a_6x^3y\\
\ \ \ \ \ \ \ +3a_7x^2y^2+2a_8xy^3+a_9y^4+\frac{1}{4}x^5+a_6x^4y+a_7x^3y^2+a_8x^2y^3+a_9xy^4+a_{10}y^5,
\end{array}\end{equation}
where we still write $x$ and $y$ instead of $\tilde{x}$ and $\tilde{y}$.

Unfortunately, because system \eqref{3.1} admits ten parameters the computations become unfeasible for the general case. So Romanovski et al. [Romanovski et al, 2017]
restrict their study to a subcase of system \eqref{3.1} with $a_1=-1,a_5=0$ which possesses the $y$-axis as an invariant curve. Thus, they look for necessary and sufficient
conditions for the system
\begin{equation}\label{3.4}\begin{array}{l}
\frac{dx}{dt}=-x^4y+a_2x^3y^2+a_3x^2y^3+a_4xy^4,\\
\frac{dy}{dt}=-\frac{1}{4}x-a_6y+\frac{1}{4}x^5+a_6x^4y+a_7x^3y^2+a_8x^2y^3+a_9xy^4+a_{10}y^5,
\end{array}\end{equation}
to have a bi-center at the points $(\pm1,0)$, or, equivalently, for the system
\begin{equation}\label{3.5}\begin{array}{l}
\frac{dx}{dt}=-y-4xy+a_2y^2-6x^2y+3a_2xy^2+a_3y^3-4x^3y+3a_2x^2y^2+2a_3xy^3\\
\ \ \ \ \ \ \ +a_4y^4-x^4y+a_2x^3y^2+a_3x^2y^3+a_4xy^4,\\
\frac{dy}{dt}=x+\frac{5}{2}x^2+4a_6xy+a_7y^2+\frac{5}{2}x^3+6a_6x^2y+3a_7xy^2+a_8y^3+\frac{5}{4}x^4+4a_6x^3y\\
\ \ \ \ \ \ \ +3a_7x^2y^2+2a_8xy^3+a_9y^4+\frac{1}{4}x^5+a_6x^4y+a_7x^3y^2+a_8x^2y^3+a_9xy^4+a_{10}y^5,
\end{array}\end{equation}
to have a center at the origin. Applying the complexification
\begin{equation}
z=x+iy,\ w=x-iy,\ T=it,\ i=\sqrt{-1},
\end{equation}
system \eqref{3.5} becomes its concomitant complex system
\begin{equation}\label{3.7}\begin{array}{l}
\frac{dz}{dT}=z+\sum\limits_{k+j=2}^5a_{kj}z^kw^j,\\
\frac{dw}{dT}=-\left(w+\sum\limits_{k+j=2}^5b_{kj}w^kz^j\right),
\end{array}\end{equation}
where
\begin{equation}\label{3.8}\begin{array}{l}
a_{20}=\frac{1}{8}(13+2ia_2-8ia_6-2a_7),\\
a_{11}=\frac{1}{4}(5-2ia_2+2a_7),\\
a_{02}=\frac{1}{8}i(3i+2a_2+8a_6+2ia_7),\\
a_{30}=\frac{1}{16}(17+6ia_2+2a_3-12ia_6-6a_7+2ia_8),\\
a_{21}=\frac{3}{16}(9-2ia_2-2a_3-4ia_6+2a_7-2ia_8),\\
a_{12}=\frac{3}{16}(1-2ia_2+2a_3+4ia_6+2a_7+2ia_8),\\
a_{03}=\frac{1}{16}i(7i+6a_2+2ia_3+12a_6+6ia_7-2a_8),\\
a_{40}=\frac{1}{64}(21+12ia_2+8a_3-4ia_4-16ia_6-12a_7+8ia_8+4a_9),\\
a_{31}=\frac{1}{16}(13-4a_3+4ia_4-8ia_6-4ia_8-4a_9),\\
a_{22}=\frac{3}{32}(5-4ia_2-4ia_4+4a_7+4a_9),\\
a_{13}=\frac{1}{16}(-3+4a_3+4ia_4+8ia_6+4ia_8-4a_9),\\
a_{04}=\frac{1}{64}(-11+12ia_2-8a_3-4ia_4+16ia_6-12a_7-8ia_8+4a_9),\\
a_{50}=\frac{1}{128}(5-4ia_{10}+4ia_2+4a_3-4ia_4-4ia_6-4a_7+4ia_8+4a_9),\\
a_{41}=\frac{1}{128}(17+20ia_{10}+4ia_2-4a_3+12ia_4-12ia_6-4a_7-4ia_8-12a_9),\\
a_{32}=\frac{1}{64}(9-20ia_{10}-4ia_2-4a_3-4ia_4-4ia_6+4a_7-4ia_8+4a_9),\\
a_{23}=\frac{1}{64}(1+20ia_{10}-4ia_2+4a_3-4ia_4+4ia_6+4a_7+4ia_8+4a_9),\\
a_{14}=\frac{1}{128}(-7-20ia_{10}+4ia_2+4a_3+12ia_4+12ia_6-4a_7+4ia_8-12a_9),\\
a_{05}=\frac{1}{128}(-3+4ia_{10}+4ia_2-4a_3-4ia_4+4ia_6-4a_7-4ia_8+4a_9)\\
b_{kj}=\overline{a_{kj}},\ k\geq0,\ j\geq0,\ k+j=2,3,4,5.
\end{array}\end{equation}

Denote that $\lambda=(a_2,a_3,a_4,a_6,a_7,a_8,a_9,a_{10})\in\mathbb{R}^8$. The following four necessary and sufficient conditions for the existence of a bi-center at the points $(\pm1,0)$ for the $Z_2$-equivariant system \eqref{3.4} are given in [Romanovski, 2017]:

$\Lambda_1=\left\{\lambda\in\mathbb{R}^8\left|a_6=0,a_8=\frac{1}{3}a_2(1-2a_7),a_9=\frac{1}{2}a_3(1-a_7),a_{10}=\frac{1}{5}a_4(3-2a_7)\right.\right\}$;

$\Lambda_2=\left\{\lambda\in\mathbb{R}^8\left|a_2=-4a_6,a_4=4a_3a_6,a_8=4a_6a_7,a_{10}=4a_6a_9\right.\right\}$;

$\Lambda_3=\left\{\lambda\in\mathbb{R}^8\left|
\begin{array}{l}
a_4=4a_6(a_3-4a_2a_6-16a_6^2),a_8=\frac{1}{3}(a_2+4a_6-2a_2a_7+4a_6a_7),\\
a_9=\frac{1}{6}(3a_3-4a_2a_6-16a_6^2-3a_3a_7-4a_2a_6a_7-16a_6^2a_7),\\
a_{10}=2a_6(-a_3+4a_2a_6+16a_6^2)(-1+a_7)
\end{array}\right.\right\}$;

$\Lambda_4=\left\{\lambda\in\mathbb{R}^8\left|a_7=-1,a_8=a_2,a_9=a_3,a_{10}=a_4\right.\right\}$.

Now we discuss the order of weak bi-center and bifurcation of critical periods when the origin of system \eqref{3.7} is a center. The next computational step is to compute the complex period constants at the origin of system \eqref{3.7}. For this purpose we use the procedure described in Section 2 running with the computer algebra system \textsc{Mathematica}. Since a bi-center is prior to be a weak bi-center, to make the computations easier we identify the order of weak center and investigate the existence of bifurcation of critical periods using the four bi-center conditions given above. Thus, our proof is split in four cases corresponding to these four conditions.

\subsection{Center of type $\Lambda_1$}

Substituting $\Lambda_1$ into formulae \eqref{2.15}-\eqref{2.16} of Theorem \ref{t2.1}, we compute the first four complex period constants at the origin of system \eqref{3.7} as follows:
\begin{equation}\label{3.9}\begin{array}{l}
\tau_1^{(1)}=\frac{1}{12}(-48-10a_2^2-9a_3-36a_7-4a_7^2),\\
\tau_2^{(1)}=\frac{1}{108}(-1224+840a_2^2+140a_2^4-189a_2a_4-1620a_7+630a_2^2a_7-816a_7^2+70a_2^2a_7^2\\
\ \ \ \ \ \ \ \ \ \ \ \ \ \ -180a_7^3-16a_7^4),\\
\tau_3^{(1)}=\frac{1}{103680}(-363672-2421720a_2^2-6039180a_2^4-779800a_2^6+1836513a_2a_4+997290a_2^3a_4\\
\ \ \ \ \ \ \ \ \ \ \ \ \ \ \ \ \ \ -81648a_4^2-531900a_7-4219110a_2^2a_7-4458300a_2^4a_7+1281420a_2a_4a_7\\
\ \ \ \ \ \ \ \ \ \ \ \ \ \ \ \ \ \ -316512a_7^2-2025030a_2^2a_7^2-539980a_2^4a_7^2+202608a_2a_4a_7^2-76860a_7^3-305760a_2^2a_7^3),\\
\tau_4^{(1)}=\frac{1}{2573329305600}f_1(a_3,a_7,a_2,a_4).
\end{array}\end{equation}
In the above expression of $\tau_k^{(1)}$s, we have already let $\tau_1^{(1)}=\cdots=\tau_{k-1}^{(1)}=0,k=2,3,4$.

\begin{theorem}
Denote $\gamma^{(1)}=(a_3,a_7,a_2,a_4)\in\mathbb{R}^4$. For system \eqref{3.4}, if $\lambda\in\Lambda_1$ the bi-center is a weak center of at most 3. Moreover, it is a the third order weak center if and only if

$\Gamma_1=\left\{\gamma^{(1)}\in\mathbb{R}^4\left|\tau_1^{(1)}=\tau_2^{(1)}=\tau_3^{(1)}=0\right.\right\}\backslash\{\gamma_1^{(1)},\gamma_2^{(1)},\gamma_3^{(1)},\gamma_4^{(1)}\}$.

Since the expressions of $f_1(a_3,a_7,a_2,a_4)$ and $\gamma_i^{(1)},i=1,2,3,4$ are tedious, we do not give here their expressions explicitly.
\end{theorem}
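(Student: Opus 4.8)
The plan is to establish the upper bound and the exactness separately, working entirely within the complex period constant framework of Section~2 applied to the family $\eqref{3.7}$ restricted to the parameter set $\Lambda_1$. By the relation $p_{2k}=-\pi\tau_k$ and Definition~1, proving that the bi-center is a weak center of order at most $3$ amounts to showing that the simultaneous vanishing of $\tau_1^{(1)}=\tau_2^{(1)}=\tau_3^{(1)}=\tau_4^{(1)}=0$ on $\Lambda_1$ forces all higher period constants to vanish as well, i.e. that such a point is an isochronous center. Concretely, I would compute enough further period constants $\tau_5^{(1)},\tau_6^{(1)},\dots$ (reduced modulo the ideal generated by $\tau_1^{(1)},\tau_2^{(1)},\tau_3^{(1)}$), form the polynomial ideal $I=\langle\tau_1^{(1)},\tau_2^{(1)},\tau_3^{(1)},\tau_4^{(1)}\rangle\subset\mathbb{R}[a_3,a_7,a_2,a_4]$, and check via a Gr\"obner basis computation in \textsc{Mathematica} that the remaining $\tau_k^{(1)}$ already lie in $\sqrt{I}$; equivalently, one decomposes $V(I)$ into irreducible components and verifies that on each component the system is linearizable (for instance by exhibiting a linearizing change of coordinates, or by recognizing each component as lying in a known isochronicity stratum). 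This yields the bound: no point of $\Lambda_1$ can be a weak center of order $4$ or higher.

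For the exactness part — that the third order is attained precisely on $\Gamma_1$ — I would analyze the variety $W=V(\tau_1^{(1)},\tau_2^{(1)},\tau_3^{(1)})\subset\mathbb{R}^4$. A point $\gamma^{(1)}\in W$ is a weak center of order exactly $3$ iff $\tau_4^{(1)}=\frac{1}{2573329305600}f_1\neq0$ there. Thus I must determine $W\cap V(f_1)$, and the claim is that this intersection consists of exactly the four points $\gamma_1^{(1)},\dots,\gamma_4^{(1)}$ listed (together, implicitly, with the isochronous locus, which on $W$ must be shown to be empty or already contained in that finite set). The strategy is: (i) use the three successive conditions $\tau_1^{(1)}=0$, $\tau_2^{(1)}=0$, $\tau_3^{(1)}=0$ to eliminate variables one at a time — $\tau_1^{(1)}$ is linear in $a_3$, so solve $a_3=-\frac{1}{9}(48+10a_2^2+36a_7+4a_7^2)$ and substitute, reducing to three variables $(a_7,a_2,a_4)$; then $\tau_2^{(1)}$ and $\tau_3^{(1)}$ become polynomials there, from which one eliminates, say, $a_4$; (ii) adjoin $f_1$ (after the same substitutions) and compute the resultant/Gr\"obner basis of the resulting zero-dimensional system to extract the finitely many common zeros; (iii) verify by back-substitution that there are exactly four real solutions and identify them as $\gamma_i^{(1)}$.

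The main obstacle I anticipate is the sheer size of $f_1(a_3,a_7,a_2,a_4)$ and of $\tau_3^{(1)}$: the elimination ideals and resultants produced along the way will be enormous polynomials, and a naive Gr\"obner basis computation may not terminate in reasonable time. The practical remedy, and the one the paper appears to adopt, is the hybrid symbolic--numeric approach announced in the abstract: perform the symbolic reductions as far as feasible, then pass to numerical root isolation (e.g. with certified interval arithmetic or \texttt{NSolve}/\texttt{Solve} followed by verification) to pin down the exceptional points, and finally confirm symbolically that at each such point the period function is indeed constant (isochronous) so that it is genuinely \emph{not} an order-$3$ weak center. A secondary subtlety is the reality constraint: the complex variety may have many more components than the real one, so after each elimination step I would restrict attention to real solutions of the accumulated polynomial system, which is precisely where numerical analysis is indispensable. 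Once $W\setminus\{\gamma_1^{(1)},\dots,\gamma_4^{(1)}\}\subseteq\{\tau_4^{(1)}\neq0\}$ is confirmed and the complementary four points are shown to be isochronous (hence of ``infinite'' weak order rather than order $3$), the characterization $\Gamma_1$ follows, together with the global bound of $3$.
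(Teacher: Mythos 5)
Your computational machinery (successive elimination/resultants in $a_3$, $a_7$, $a_2$ down to a univariate polynomial, followed by numerical real-root isolation) is exactly what the paper uses, but the logical skeleton you hang it on is off in two places. First, for the upper bound the paper does not show that the real points of $V(\tau_1^{(1)},\tau_2^{(1)},\tau_3^{(1)},\tau_4^{(1)})$ are isochronous; it shows that this real variety is \emph{empty}: the final resultant $R_{1213,1214}^{(1)}(a_4)$ has no real roots, hence $\tau_1^{(1)},\dots,\tau_4^{(1)}$ have no common real zero and no point of $\Lambda_1$ can be a weak center of order $\geq 4$. Your plan to compute $\tau_5^{(1)},\tau_6^{(1)},\dots$ and verify membership in $\sqrt{I}$ is a detour that answers the wrong question (it is a statement about the complex variety, and in any case there is nothing to linearize because there are no real points); had you carried out your own elimination you would have discovered the emptiness directly, so this part is recoverable, but as written the argument is aimed at the wrong target.

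Second, and more seriously, you have misidentified the excluded points $\gamma_1^{(1)},\dots,\gamma_4^{(1)}$. You take them to be the real points of $V(\tau_1^{(1)},\tau_2^{(1)},\tau_3^{(1)})\cap V(f_1)$ and propose to show they are isochronous. But by the very emptiness result above, $\tau_4^{(1)}$ (equivalently $f_1$) vanishes \emph{nowhere} on the real points of $V(\tau_1^{(1)},\tau_2^{(1)},\tau_3^{(1)})$, so your step (ii) returns the empty set and your step (iii) cannot produce four points; moreover no point of $\Lambda_1$ is isochronous (the paper reconfirms the non-isochronicity result of Romanovski et al.). In the paper the $\gamma_i^{(1)}$ are instead the four real solutions of
\begin{equation*}
\tau_1^{(1)}=\tau_2^{(1)}=\tau_3^{(1)}=0,\qquad
\det\left[\frac{\partial(\tau_1^{(1)},\tau_2^{(1)},\tau_3^{(1)})}{\partial(a_3,a_7,a_2)}\right]=0,
\end{equation*}
i.e.\ the points where the Jacobian of the first three period constants degenerates; they are removed so that the transversality needed for the subsequent critical-period bifurcation theorem holds on $\Gamma_1$, not because the weak-center order changes there (indeed $\tau_4^{(1)}\neq0$ at those points too). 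Your proposal as written would therefore fail to produce the set $\Gamma_1$ of the statement.
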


\begin{proof}
The \textsc{Mathematica} routine \texttt{Resultant[poly$_1$,poly$_2$,var]} gives the resultant of the polynomials \texttt{poly}$_1$ and \texttt{poly}$_2$ with respect to the variable \texttt{var}. From the algebraic theory, \texttt{Resultant[poly$_1$,poly$_2$,var]}$=0$ is a necessary
condition for \texttt{poly}$_1$=\texttt{poly}$_2$$=0$. Denote by $N_k\in\mathbb{N}$ a natural number of $k$ digits.

First of all, computing the resultant of $\tau_1^{(1)},\tau_2^{(1)},\tau_3^{(1)},\tau_4^{(1)}$ with respect to $a_3$, we have
\begin{equation}\begin{array}{l}
R_{1,2}^{(1)}=\texttt{Resultant[}\tau_1^{(1)},\tau_2^{(1)},a_3\texttt{]}=\frac{1}{108}F_{1,2}^{(1)}(a_7,a_2,a_4),\\
R_{1,3}^{(1)}=\texttt{Resultant[}\tau_1^{(1)},\tau_3^{(1)},a_3\texttt{]}=\frac{1}{103680}F_{1,3}^{(1)}(a_7,a_2,a_4),\\
R_{1,4}^{(1)}=\texttt{Resultant[}\tau_1^{(1)},\tau_4^{(1)},a_3\texttt{]}=\frac{1}{2573329305600}F_{1,4}^{(1)}(a_7,a_2,a_4).
\end{array}\end{equation}
Next, computing the resultant of $R_{1,2}^{(1)},R_{1,3}^{(1)},R_{1,4}^{(1)}$ with respect to $a_7$, we have
\begin{equation}\begin{array}{l}
R_{12,13}^{(1)}=\texttt{Resultant[}R_{1,2}^{(1)},R_{1,3}^{(1)},a_7\texttt{]}=\frac{1}{6855297075118080000}F_{12,13}^{(1)}(a_2,a_4),\\
R_{12,14}^{(1)}=\texttt{Resultant[}R_{1,2}^{(1)},R_{1,4}^{(1)},a_7\texttt{]}=\frac{F_{12,14}^{(1)}(a_2,a_4)}{1200135732896571944053984339414625958651494400000000}.
\end{array}\end{equation}
Finally, computing the resultant of $R_{12,13}^{(1)},R_{12,14}^{(1)}$ with respect to $a_2$, we have
\begin{equation}
R_{1213,1214}^{(1)}=\texttt{Resultant[}R_{12,13}^{(1)},R_{12,14}^{(1)},a_2\texttt{]}=\frac{N_{109}}{N_{1420}}F_{1213,1214}^{(1)}(a_4)G_{1213,1214}^{(1)}(a_4),
\end{equation}
where $F_{1213,1214}^{(1)}(a_4)$ and $G_{1213,1214}^{(1)}(a_4)$ are unary polynomials in $a_4$ of degree 64 and 192, respectively.

Performing the \textsc{Mathematica} routine $\texttt{NSolve[}R_{1213,1214}^{(1)}==0,a_4\texttt{]}$ gives only complex solutions. However, all parameters of system \eqref{3.4} are real, so we get the conclusion that $\tau_1^{(1)},\tau_2^{(1)},\tau_3^{(1)},\tau_4^{(1)}$ have no common real root, which implies that the bi-center of system \eqref{3.4} is at most the third order weak center.

Computing the determinant of Jacobian matrix in this case, we get
\begin{equation}\label{3.16}
\text{det}\left[\frac{\partial(\tau_1^{(1)},\tau_2^{(1)},\tau_3^{(1)})}{\partial(a_3,a_7,a_2)}\right]=\frac{7}{3732480}F(a_7,a_2,a_4).
\end{equation}
Performing the \textsc{Mathematica} routine
\begin{equation}\label{3.17}
\texttt{NSolve[}\{\tau_1^{(1)}==0,\tau_2^{(1)}==0,\tau_3^{(1)}==0,F(a_7,a_2,a_4)==0\},\{a_3,a_7,a_2,a_4\},\texttt{Reals]}
\end{equation}
gives the four real solutions $\gamma_k^{(1)},k=1,2,3,4$.
\end{proof}

\begin{theorem}
For system \eqref{3.4}, if $\lambda\in\Lambda_1$, the maximum number of bifurcation of critical periods is 3, and there are exactly 3 bifurcation of critical periods after a suitable perturbation.
\end{theorem}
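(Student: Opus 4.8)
The plan is to establish a two-sided bound. The upper bound ``at most $3$'' follows immediately from the previous theorem together with the general Bautin-type reduction principle: since the bi-center of type $\Lambda_1$ is a weak center of order at most $3$ (i.e. $\tau_1^{(1)},\dots,\tau_4^{(1)}$ have no common real zero, while $\tau_1^{(1)}=\tau_2^{(1)}=\tau_3^{(1)}=0$ is attainable on the nonempty set $\Gamma_1$), the period function $P(h,\lambda)-2\pi$ has, after division by the first nonvanishing term, an expansion controlled by $\tau_1^{(1)},\tau_2^{(1)},\tau_3^{(1)}$; by the standard argument of Chicone--Jacobs (as used in [Cherkas et al, 1997] and recalled in the Introduction) no more than $3$ simple zeros of $P'(h,\lambda_1)$ can appear in a small interval $(0,\varepsilon)$ when one perturbs near a parameter where $p_2=p_4=p_6=0$ but $p_8\neq 0$. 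So I would first invoke Definition~2 and this principle to get the inequality ``$\le 3$''.

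For the lower bound I would carry out the classical chain-of-signs construction. Pick a base parameter $\lambda_*$ in the relative interior of $\Gamma_1$, so that $\tau_1^{(1)}(\lambda_*)=\tau_2^{(1)}(\lambda_*)=\tau_3^{(1)}(\lambda_*)=0$ and $\tau_4^{(1)}(\lambda_*)\neq 0$; equivalently $p_2=p_4=p_6=0$, $p_8\neq 0$ at $\lambda_*$. The key step is to show that the map $\lambda\mapsto(\tau_1^{(1)},\tau_2^{(1)},\tau_3^{(1)})(\lambda)$ is a submersion at $\lambda_*$, i.e. that its $3\times 3$ Jacobian in suitable coordinates is nonsingular. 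This is exactly what \eqref{3.16} provides: $\det[\partial(\tau_1^{(1)},\tau_2^{(1)},\tau_3^{(1)})/\partial(a_3,a_7,a_2)]=\tfrac{7}{3732480}F(a_7,a_2,a_4)$, and the four exceptional solutions $\gamma_k^{(1)}$ of \eqref{3.17} are removed when we work on $\Gamma_1$ (they are precisely the points excluded in the definition of $\Gamma_1$), so $F\neq 0$ at $\lambda_*\in\Gamma_1\setminus\{\gamma_k^{(1)}\}$ — the Jacobian is invertible there. Consequently the three quantities $\tau_1^{(1)},\tau_2^{(1)},\tau_3^{(1)}$ can be varied independently by an arbitrarily small change of $(a_3,a_7,a_2)$, and we choose a nearby $\lambda_1$ so that they take small values with alternating signs relative to the sign of $p_8(\lambda_*)=-\pi\tau_4^{(1)}(\lambda_*)$:
\begin{equation}
0<|\tau_1^{(1)}|\ll|\tau_2^{(1)}|\ll|\tau_3^{(1)}|\ll 1,\qquad \operatorname{sgn}\tau_1^{(1)}=-\operatorname{sgn}\tau_2^{(1)}=\operatorname{sgn}\tau_3^{(1)}=-\operatorname{sgn}\tau_4^{(1)}.
\end{equation}
Writing $\phi(h,\lambda_1)=P(h,\lambda_1)-2\pi=p_2h^2+p_4h^4+p_6h^6+p_8h^8+O(h^{10})$ and using $p_{2k}=-\pi\tau_k^{(1)}$, the derivative $\phi'(h,\lambda_1)=h(2p_2+4p_4h^2+6p_6h^4+8p_8h^6+\cdots)$ has, by the usual Rolle/intermediate-value scaling argument (successive dominations on nested subintervals of $(0,\varepsilon)$), exactly three simple zeros in $(0,\varepsilon)$ for $\varepsilon$ small. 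Since $P'(h,\lambda_1)=\phi'(h,\lambda_1)/(2\pi+\cdots)$ up to the positive reparametrization factor coming from $P$ versus the raw period integral, these are three critical periods bifurcating from the bi-center, and by the $Z_2$-symmetry the same happens simultaneously at $(-1,0)$.

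The main obstacle — and really the only nontrivial analytic point — is verifying that the base point $\lambda_*$ with the required non-degeneracy actually exists, i.e. that $\Gamma_1$ is nonempty and that on it $\tau_4^{(1)}\not\equiv 0$ while $F\neq 0$. The first half is delivered by the \textsc{Mathematica} resultant computation in the proof of the previous theorem (the system $\tau_1^{(1)}=\tau_2^{(1)}=\tau_3^{(1)}=0$ has a real solution variety of positive dimension, since only $\tau_1^{(1)},\dots,\tau_4^{(1)}$ together force a contradiction); the second half is the statement that the variety $\{\tau_1^{(1)}=\tau_2^{(1)}=\tau_3^{(1)}=F=0\}$ is exactly the finite set $\{\gamma_1^{(1)},\dots,\gamma_4^{(1)}\}$, again from \eqref{3.17}. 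Thus on the Zariski-dense subset $\Gamma_1$ of the common zero set of the first three period constants both $\tau_4^{(1)}\neq 0$ and $\det[\partial(\tau_1^{(1)},\tau_2^{(1)},\tau_3^{(1)})/\partial(a_3,a_7,a_2)]\neq 0$ hold, which is all the construction needs. Combining the upper and lower bounds gives that the maximum number of bifurcation of critical periods is exactly $3$, and that $3$ are realized after a suitable perturbation. $\qquad\blacksquare$
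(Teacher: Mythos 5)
Your proposal is correct and follows essentially the same route as the paper: the upper bound comes from the previous theorem's result that the weak bi-center has order at most $3$, and the lower bound comes from the nonvanishing of $\det\left[\partial(\tau_1^{(1)},\tau_2^{(1)},\tau_3^{(1)})/\partial(a_3,a_7,a_2)\right]$ on $\Gamma_1$ (guaranteed because the exceptional points $\gamma_k^{(1)}$ where $F=0$ are excluded from $\Gamma_1$) combined with the standard Chicone--Jacobs alternating-sign perturbation. The only difference is presentational: you spell out the chain-of-signs construction in full, whereas the paper instead exhibits a concrete numerical base point (with $a_4=0$) at which $\tau_1^{(1)},\tau_2^{(1)},\tau_3^{(1)}$ vanish, $\tau_4^{(1)}\neq0$, and the Jacobian determinant is nonzero.
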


\begin{proof}
By checking the Jacobian matrix for this case, we obtain from \eqref{3.17} that
\begin{equation}
\text{det}\left.\left[\frac{\partial(\tau_1^{(1)},\tau_2^{(1)},\tau_3^{(1)})}{\partial(a_3,a_7,a_2)}\right]\right|_{\Gamma_1}\neq0.
\end{equation}
For example, setting $a_4=a_4^*=0$ and performing the \textsc{Mathematica} routine
\begin{equation}
\texttt{NSolve[}\{\tau_1^{(1)}==0,\tau_2^{(1)}==0,\tau_3^{(1)}==0\},\{a_3,a_7,a_2\},\texttt{Reals]}
\end{equation}
gives four real solutions, one of which is
\begin{equation}\begin{array}{l}
a_3^*=-1.6657772441340260275382933375282366149192053853441\cdots,\\
a_7^*=-4.1967363822359183641439548974671095385907064222453\cdots,\\
a_2^*=-2.1822951200460674220108297255909871925482447960032\cdots.
\end{array}\end{equation}
For the purpose of verification, substituting $\gamma_*=(a_3^*,a_7^*,a_2^*,a_4^*)$ into \eqref{3.9} and \eqref{3.16}, we have
\begin{equation}\begin{array}{l}
\tau_1^{(1)}|_{\gamma_*}=0.\times10^{-49}\approx0,\\
\tau_2^{(1)}|_{\gamma_*}=0.\times10^{-47}\approx0,\\
\tau_3^{(1)}|_{\gamma_*}=0.\times10^{-46}\approx0,\\
\tau_4^{(1)}|_{\gamma_*}=-29.95956914787823376077394571175254368388533110\cdots\neq0,\\
\text{det}\left.\left[\frac{\partial(\tau_1,\tau_2,\tau_3)}{\partial(a_3,a_7,a_2)}\right]\right|_{\gamma_*}=-303.622391170237307523268535764275295357683185\cdots\neq0.
\end{array}\end{equation}
Theoretically speaking, the above $\tau_k^{(1)}|_{\gamma_*},k=1,2,3$ should be exactly equal to zero. However, due to numerical computation error, they are only very close to zero, which does not affect the conclusion.

\end{proof}

\subsection{Center of type $\Lambda_2$}

Substituting $\Lambda_2$ into formulae \eqref{2.15}-\eqref{2.16}, we compute the first five complex period constants at the origin of system \eqref{3.7} as follows:
\begin{equation}\begin{array}{l}
\tau_1^{(2)}=\frac{1}{12}(-48-9a_3-192a_6^2-36a_7-4a_7^2),\\
\tau_2^{(2)}=\frac{1}{54}(-972-7200a_6^2-13248a_6^4-816a_7-3264a_6^2a_7-144a_7^2-96a_6^2a_7^2+4a_7^3-135a_9-36a_7a_9),\\
\tau_3^{(2)}=\frac{1}{2570940}(12398832+49595328a_6^2+5635584a_7+22542336a_6^2a_7-420420a_7^2+8407680a_6^2a_7^2\\
\ \ \ \ \ \ \ \ \ \ \ \ \ \ \ \ \ \ \ -224640a_7^3-6920832a_6^2a_7^3-373348a_7^4-184512a_6^2a_7^4+7688a_7^5-1791558a_9\\
\ \ \ \ \ \ \ \ \ \ \ \ \ \ \ \ \ \ \ -40807152a_6^2a_9-2137239a_7a_9+15718752a_6^2a_7a_9+709731a_7^2a_9-69192a_7^3a_9\\
\ \ \ \ \ \ \ \ \ \ \ \ \ \ \ \ \ \ \ -899829a_9^2),\\
\tau_4^{(2)}=\frac{1}{6845156140890420}f_2(a_3,a_6,a_7,a_9),\\
\tau_5^{(2)}=\frac{1}{25488874431785566645482240}f_3(a_3,a_6,a_7,a_9).
\end{array}\end{equation}
In the above expression of $\tau_k^{(2)}s$, we have already let $\tau_1^{(2)}=\cdots=\tau_{k-1}^{(2)}=0,k=2,3,4,5$.

\begin{theorem}
Denote $\gamma^{(2)}=(a_3,a_6,a_7,a_9)\in\mathbb{R}^4$. For system \eqref{3.4}, if $\lambda\in\Lambda_2$ the bi-center is a weak center of at most 4. Moreover, it is a the fourth order weak center if and only if

$\Gamma_2=\{\gamma_1^{(2)},\gamma_2^{(2)},\gamma_3^{(2)},\gamma_4^{(2)},\gamma_5^{(2)},\gamma_6^{(2)},\gamma_7^{(2)},\gamma_8^{(2)},\gamma_9^{(2)},\gamma_{10}^{(2)},\gamma_{11}^{(2)},\gamma_{12}^{(2)},\gamma_{13}^{(2)},\gamma_{14}^{(2)}\}$.
\end{theorem}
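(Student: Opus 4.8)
The plan is to follow exactly the resultant-elimination strategy already employed in the proof of the $\Lambda_1$ case, but now carried out for the five period constants $\tau_1^{(2)},\dots,\tau_5^{(2)}$ in the four parameters $\gamma^{(2)}=(a_3,a_6,a_7,a_9)$. First I would show the \emph{upper bound}: the bi-center of type $\Lambda_2$ is a weak center of order at most $4$. This amounts to proving that $\tau_1^{(2)}=\tau_2^{(2)}=\tau_3^{(2)}=\tau_4^{(2)}=\tau_5^{(2)}=0$ has no common real solution. I would eliminate variables one at a time using \texttt{Resultant}: from the pairs $(\tau_1^{(2)},\tau_j^{(2)})$, $j=2,3,4,5$, eliminate $a_3$ (it appears only linearly in $\tau_1^{(2)}$, so this is clean) to obtain polynomials $R^{(2)}_{1,j}(a_6,a_7,a_9)$; then eliminate $a_9$ from suitable pairs of these to get polynomials in $(a_6,a_7)$; then eliminate $a_7$ to get univariate polynomials in $a_6$ (or equivalently $a_6^2$, since $a_6$ enters only through even powers); finally call \texttt{NSolve} on the resulting univariate resultant. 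As in the $\Lambda_1$ case, I expect the real roots obtained to be incompatible with the earlier constraints (or to be empty / only complex), which forces $\tau_5^{(2)}\not\equiv0$ on the variety $\{\tau_1^{(2)}=\dots=\tau_4^{(2)}=0\}$ and hence gives the bound $4$.

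Next I would establish the \emph{realizability} of order exactly $4$, i.e. the characterization of $\Gamma_2$. By Definition~1 the bi-center is a fourth-order weak center precisely on the set $V=\{\tau_1^{(2)}=\tau_2^{(2)}=\tau_3^{(2)}=\tau_4^{(2)}=0,\ \tau_5^{(2)}\neq0\}$. The claim is that $V$ consists of the fourteen isolated points $\gamma_1^{(2)},\dots,\gamma_{14}^{(2)}$. To see that $V$ is finite I would use the resultant tower from the first part: since $\tau_1^{(2)},\dots,\tau_4^{(2)}$ (four equations in four unknowns) already have only finitely many common complex zeros (which the resultant chain confirms, the final univariate resultant being a nonzero polynomial), $V$ is contained in a finite set. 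I would then run
\begin{equation}
\texttt{NSolve[}\{\tau_1^{(2)}==0,\tau_2^{(2)}==0,\tau_3^{(2)}==0,\tau_4^{(2)}==0\},\{a_3,a_6,a_7,a_9\},\texttt{Reals]}
\end{equation}
to extract all real solutions, discard any at which $\tau_5^{(2)}$ also vanishes, and collect the survivors as $\gamma_1^{(2)},\dots,\gamma_{14}^{(2)}$; numerically substituting each back into $\tau_5^{(2)}$ verifies $\tau_5^{(2)}|_{\gamma_k^{(2)}}\neq0$.

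The main obstacle will be the size of the intermediate resultants. Already in the $\Lambda_1$ case the final resultant carried coefficients with over a thousand digits ($N_{1420}$) and factors of degree $192$; with one more period constant ($\tau_5^{(2)}$ rather than stopping at $\tau_4^{(1)}$) and the denominator $25488874431785566645482240$ in $\tau_5^{(2)}$, the polynomial swell in the elimination chain is likely to be the real computational bottleneck, and care must be taken to clear denominators and extract content at each step so that \textsc{Mathematica}'s \texttt{Resultant} and \texttt{NSolve} remain tractable. A secondary subtlety is ensuring that the \texttt{NSolve} call genuinely returns \emph{all} real roots of the zero-dimensional system $\{\tau_1^{(2)}=\dots=\tau_4^{(2)}=0\}$ — this is why the resultant-based finiteness argument is needed as a safeguard, since it bounds the total number of complex solutions and thus certifies that no real branch has been missed. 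Once finiteness is in hand, the enumeration of the fourteen points and the non-vanishing check on $\tau_5^{(2)}$ are routine, exactly parallel to the treatment of the $\Lambda_1$ case.
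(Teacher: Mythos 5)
Your proposal matches the paper's approach for the $\Lambda_2$ case: the paper simply runs \texttt{NSolve} on $\{\tau_1^{(2)}=\tau_2^{(2)}=\tau_3^{(2)}=\tau_4^{(2)}=0\}$ over the reals, obtains the fourteen solutions $\gamma_1^{(2)},\dots,\gamma_{14}^{(2)}$, and verifies $\tau_5^{(2)}|_{\gamma_k^{(2)}}\neq0$ at each of them, which simultaneously gives the upper bound of $4$ and the characterization of $\Gamma_2$. Your preliminary resultant tower certifying zero-dimensionality is an extra safeguard the paper omits in this subsection (it reserves that elimination machinery for the $\Lambda_1$ and $\Lambda_4$ cases, where the solution variety is not finite), but it does not change the substance of the argument.
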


\begin{proof}
Performing the \textsc{Mathematica} routine
\begin{equation}
\texttt{NSolve[}\{\tau_1^{(2)}==0,\tau_2^{(2)}==0,\tau_3^{(2)}==0,\tau_4^{(2)}==0\},\{a_3,a_6,a_7,a_9\},\texttt{Reals]}
\end{equation}
gives fourteen real solutions $\gamma_k^{(2)},k=1,2,\cdots,14$. A direct computation gives
\begin{equation}
\tau_5^{(2)}|_{\gamma_1^{(2)}}=-0.009131827261973434519460359182840789927726\neq0.
\end{equation}
The remaining thirteen cases are analogous.
\end{proof}

\begin{theorem}
For system \eqref{3.4}, if $\lambda\in\Lambda_2$, the maximum number of bifurcation of critical periods is 4, and there are exactly 4 bifurcation of critical periods after a suitable perturbation.
\end{theorem}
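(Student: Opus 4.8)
The plan is to establish the upper bound and the lower bound separately, exactly as in the $\Lambda_1$ case, using the five period constants $\tau_1^{(2)},\dots,\tau_5^{(2)}$ already displayed. For the upper bound, I would invoke the previous theorem: we have shown that the common zero set of $\tau_1^{(2)}=\tau_2^{(2)}=\tau_3^{(2)}=\tau_4^{(2)}=0$ in $\mathbb{R}^4$ consists of the fourteen isolated points $\gamma_k^{(2)}$, $k=1,\dots,14$, and at each of them $\tau_5^{(2)}$ does not vanish. Hence no parameter value makes $\tau_1^{(2)}=\cdots=\tau_5^{(2)}=0$, so the bi-center is a weak center of order at most $4$, and by the relation $p_{2k}=-\pi\tau_k$ together with Definition~2 (the argument of [Chicone \& Jacobs, 1989; Cherkas et al, 1997] that a weak center of order $k$ produces at most $k$ critical periods), at most $4$ critical periods can bifurcate.

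For the lower bound I would carry out the standard Bautin-type transversality argument. First I would check that at one of the fourteen points, say $\gamma_1^{(2)}$, the Jacobian of the first four period constants with respect to the four free parameters is nonsingular:
\begin{equation}
\det\left.\left[\frac{\partial(\tau_1^{(2)},\tau_2^{(2)},\tau_3^{(2)},\tau_4^{(2)})}{\partial(a_3,a_6,a_7,a_9)}\right]\right|_{\gamma_1^{(2)}}\neq0,
\end{equation}
which I would verify numerically to high precision with \textsc{Mathematica}, together with $\tau_5^{(2)}|_{\gamma_1^{(2)}}\neq0$ (already recorded in the previous proof). Given this nondegeneracy, the map $\lambda\mapsto(\tau_1^{(2)},\tau_2^{(2)},\tau_3^{(2)},\tau_4^{(2)})$ is a local diffeomorphism near $\gamma_1^{(2)}$, so we may choose parameter values realizing any prescribed small signs and magnitudes $0<|\tau_1^{(2)}|\ll|\tau_2^{(2)}|\ll|\tau_3^{(2)}|\ll|\tau_4^{(2)}|\ll|\tau_5^{(2)}|$ with alternating signs. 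By the classical argument on the structure of the period function $P(h,\lambda)-2\pi=\sum p_{2k}h^{2k}$, such a choice forces $P'(h,\lambda)$ to change sign four times on a small interval $(0,\varepsilon)$, producing exactly $4$ critical periods. Combined with the upper bound, this shows the maximum is attained.

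I would present this as follows: state that the upper bound follows immediately from the preceding theorem and $p_{2k}=-\pi\tau_k$; then give the Jacobian computation at $\gamma_1^{(2)}$ and cite the perturbation argument. The main obstacle is the Jacobian nondegeneracy check: the $4\times4$ determinant is a large polynomial in eight-plus parameters and must be evaluated at a point $\gamma_1^{(2)}$ that itself is only known numerically as a high-precision root of a resultant system, so the verification is necessarily numerical and one must argue—exactly as the authors do at the end of the $\Lambda_1$ case—that the residual error is far smaller than the computed nonzero value and hence does not affect the conclusion. A secondary subtlety is confirming that the fourteen points are genuinely simple isolated zeros (so that the weak-center order is well-defined at each), which again rests on the resultant computations and \texttt{NSolve} output from the previous proof; I would simply reference that computation rather than redo it.
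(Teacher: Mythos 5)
Your proposal is correct and follows essentially the same route as the paper: the upper bound comes from the preceding theorem (order of the weak bi-center is at most 4), and the lower bound from the numerical verification that $\det\left[\partial(\tau_1^{(2)},\tau_2^{(2)},\tau_3^{(2)},\tau_4^{(2)})/\partial(a_3,a_6,a_7,a_9)\right]\neq 0$ at $\gamma_1^{(2)}$ (the paper computes it to be approximately $-0.3614$), with the remaining thirteen points treated analogously. Your write-up is in fact more explicit than the paper's about the Bautin-type perturbation step that converts the nondegenerate Jacobian into four actual critical periods, but the substance is identical.
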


\begin{proof}
Checking the Jacobian matrix for this case, we see that
\begin{equation}
\text{det}\left.\left[\frac{\partial(\tau_1^{(2)},\tau_2^{(2)},\tau_3^{(2)},\tau_4^{(2)})}{\partial(a_3,a_6,a_7,a_9)}\right]\right|_{\gamma_1^{(2)}}=-0.36143871856336911594092041376646558130041029\cdots\neq0.
\end{equation}
The remaining thirteen cases are analogous.
\end{proof}

\begin{remark}
In this subsection, the reason why the maximum weak bi-center condition is expressed as a set with numerical elements is that there are finite parameter vectors in it. In contrast, in the above subsection, the maximum weak bi-center condition is expressed as a set with symbolic expressions, because there are infinite parameter vectors in it. Similar cases will occur in the next two subsections.
\end{remark}

\subsection{Center of type $\Lambda_3$}

Substituting $\Lambda_3$ into formulae \eqref{2.15}-\eqref{2.16}, we compute the first five complex period constants at the origin of system \eqref{3.7} as follows:
\begin{equation}\begin{array}{l}
\tau_1^{(3)}=\frac{1}{12}(-48-10a_2^2-9a_3+4a_2a_6-16a_6^2-36a_7-4a_7^2),\\
\tau_2^{(3)}=\frac{1}{54}(-612+420a_2^2+70a_2^4+3360a_2a_6+490a_2^3a_6+960a_6^2+1740a_2^2a_6^2+8320a_2a_6^3\\
\ \ \ \ \ \ \ \ \ \ \ \ \ +5632a_6^4-810a_7+315a_2^2a_7+2520a_2a_6a_7+720a_6^2a_7-408a_7^2+35a_2^2a_7^2+280a_2a_6a_7^2\\
\ \ \ \ \ \ \ \ \ \ \ \ \ +80a_6^2a_7^2-90a_7^3-8a_7^4),\\
\tau_3^{(3)}=\frac{1}{51840}f_4(a_3,a_7,a_2,a_6),\\
\tau_4^{(3)}=\frac{1}{2150016634828800}f_5(a_3,a_7,a_2,a_6),\\
\tau_5^{(3)}=\frac{1}{4482512233510082494464000}f_6(a_3,a_7,a_2,a_6).
\end{array}\end{equation}
In the above expression of $\tau_k^{(3)}$s, we have already let $\tau_1^{(3)}=\cdots=\tau_{k-1}^{(3)}=0,k=2,3,4,5$.

\begin{theorem}
Denote $\gamma^{(3)}=(a_3,a_7,a_2,a_6)\in\mathbb{R}^4$. For system \eqref{3.4}, if $\lambda\in\Lambda_3$ the bi-center is a weak center of at most 4. Moreover, it is a the fourth order weak center if and only if

$\Gamma_3=\{\gamma_1^{(3)},\gamma_2^{(3)},\gamma_3^{(3)},\gamma_4^{(3)},\gamma_5^{(3)},\gamma_6^{(3)},\gamma_7^{(3)},\gamma_8^{(3)},\gamma_9^{(3)},\gamma_{10}^{(3)},\gamma_{11}^{(3)},\gamma_{12}^{(3)},\gamma_{13}^{(3)},\gamma_{14}^{(3)},\gamma_{15}^{(3)},\gamma_{16}^{(3)}\}$.
\end{theorem}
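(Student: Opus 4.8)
The plan is to argue exactly as in the proof for the center of type $\Lambda_2$, since here too the maximal weak bi-center locus $\Gamma_3$ turns out to be a \emph{finite} set rather than a positive-dimensional variety. Concretely, everything reduces to two facts about the first five reduced period constants $\tau_1^{(3)},\dots,\tau_5^{(3)}$ displayed above: that $\{\tau_1^{(3)}=\tau_2^{(3)}=\tau_3^{(3)}=\tau_4^{(3)}=0\}$ has exactly sixteen real solutions, and that $\tau_5^{(3)}$ vanishes at none of them. The first gives the claimed set $\Gamma_3=\{\gamma_1^{(3)},\dots,\gamma_{16}^{(3)}\}$; together they give both that the weak bi-center has order at most $4$ and that it has order exactly $4$ iff $\gamma^{(3)}\in\Gamma_3$.

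First I would regard $\tau_1^{(3)},\tau_2^{(3)},\tau_3^{(3)},\tau_4^{(3)}$ as polynomials in the four real parameters $a_3,a_7,a_2,a_6$ and compute their common real zero set by running
\[
\texttt{NSolve[}\{\tau_1^{(3)}==0,\tau_2^{(3)}==0,\tau_3^{(3)}==0,\tau_4^{(3)}==0\},\{a_3,a_7,a_2,a_6\},\texttt{Reals]},
\]
which yields the sixteen real solutions $\gamma_1^{(3)},\dots,\gamma_{16}^{(3)}$. To certify that \texttt{NSolve} has not missed any real solution and that this zero locus is genuinely zero-dimensional, I would, as a safeguard, also eliminate the parameters one at a time with \texttt{Resultant}, exactly as in the treatment of the center of type $\Lambda_1$: eliminate $a_3$ to obtain resultants in $(a_7,a_2,a_6)$, then $a_7$ to obtain polynomials in $(a_2,a_6)$, then $a_6$ to arrive at a univariate polynomial in $a_2$ whose real roots can be listed; the count must match the sixteen solutions found directly.

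Next, to rule out order five or higher, I would substitute each $\gamma_k^{(3)}$ into $\tau_5^{(3)}=\frac{1}{4482512233510082494464000}f_6(a_3,a_7,a_2,a_6)$ and check that it is nonzero (for instance $\tau_5^{(3)}|_{\gamma_1^{(3)}}\neq 0$, a small nonzero decimal, the remaining fifteen cases being analogous). Since $\tau_1^{(3)},\dots,\tau_4^{(3)}$ vanish simultaneously only at these sixteen points and $\tau_5^{(3)}$ vanishes at none of them, there is no parameter value with $\tau_1^{(3)}=\cdots=\tau_5^{(3)}=0$, so the order is at most $4$, with equality precisely on $\Gamma_3$. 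As before, the floating-point residuals $\tau_k^{(3)}|_{\gamma_j^{(3)}}$ for $k\le 4$ will be tiny but nonzero, which is harmless since those quantities are known a priori to vanish exactly. The main obstacle I anticipate is purely computational: the polynomials $f_4,f_5,f_6$ are enormous (witness the sixteen- and twenty-five-digit denominators of $\tau_4^{(3)}$ and $\tau_5^{(3)}$), so the recursive evaluation of the period constants via Theorem \ref{t2.1}, the successive resultant eliminations, and above all the certification that \texttt{NSolve} has found \emph{all} real solutions of the four-variable system must be handled with care; the rest is a direct, if lengthy, symbolic-numeric verification.
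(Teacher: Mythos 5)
Your proposal is correct and follows essentially the same route as the paper: solve $\tau_1^{(3)}=\tau_2^{(3)}=\tau_3^{(3)}=\tau_4^{(3)}=0$ numerically over the reals to obtain the sixteen points $\gamma_1^{(3)},\dots,\gamma_{16}^{(3)}$, then verify $\tau_5^{(3)}\neq 0$ at each of them (the paper exhibits $\tau_5^{(3)}|_{\gamma_1^{(3)}}\approx -455.79$ and treats the other fifteen analogously). Your additional resultant-elimination safeguard to certify that \texttt{NSolve} has found all real solutions is a sensible strengthening the paper does not carry out in this case, but it does not change the argument.
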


\begin{proof}
Performing the \textsc{Mathematica} routine
\begin{equation}
\texttt{NSolve[}\{\tau_1^{(3)}==0,\tau_2^{(3)}==0,\tau_3^{(3)}==0,\tau_4^{(3)}==0\},\{a_3,a_7,a_2,a_6\},\texttt{Reals]}
\end{equation}
gives the sixteen real solutions $\gamma_k^{(3)},k=1,2,\cdots,16$. A direct computation gives
\begin{equation}
\tau_5^{(3)}|_{\gamma_1^{(3)}}=-455.788157320391380470485440367876484572373\neq0.
\end{equation}
The remaining fifteen cases are analogous.
\end{proof}

\begin{theorem}
For system \eqref{3.4}, if $\lambda\in\Lambda_3$, the maximum number of bifurcation of critical periods is 4, and there are exactly 4 bifurcation of critical periods after a suitable perturbation.
\end{theorem}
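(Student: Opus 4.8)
The plan is to combine the upper bound already obtained in the preceding theorem with a lower bound argument of Bautin--Chicone--Jacobs type. By that theorem, whenever $\lambda\in\Lambda_3$ the bi-center of system \eqref{3.4} is a weak center of order at most $4$; consequently, by the result recalled in the introduction (that $k$ critical periods can bifurcate from a weak center of order $k$, and no more), at most $4$ local critical periods can bifurcate from it. It therefore suffices to exhibit a single parameter value $\gamma_*^{(3)}\in\Gamma_3$ from which exactly $4$ critical periods do bifurcate.

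To this end I would invoke the standard sufficient condition, already used verbatim in the proofs of the $\Lambda_1$ and $\Lambda_2$ cases: if $\gamma_*^{(3)}$ is a point at which $\tau_1^{(3)}=\tau_2^{(3)}=\tau_3^{(3)}=\tau_4^{(3)}=0$, $\tau_5^{(3)}\neq0$, and the Jacobian
\[
\det\left[\frac{\partial(\tau_1^{(3)},\tau_2^{(3)},\tau_3^{(3)},\tau_4^{(3)})}{\partial(a_3,a_7,a_2,a_6)}\right]
\]
does not vanish at $\gamma_*^{(3)}$, then the map $\gamma^{(3)}\mapsto(\tau_1^{(3)},\dots,\tau_4^{(3)})$ is a local submersion there, and hence a suitable perturbation inside $\Lambda_3$ produces four simple positive zeros of $P'(h,\cdot)$. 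This is the analogue for period constants of Bautin's cyclicity argument, transported through the relation $p_{2k}=-\pi\tau_k$ from Section~2.

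Concretely the steps are: (i) take one of the sixteen real solutions $\gamma_k^{(3)}$ of the system $\{\tau_1^{(3)}=\tau_2^{(3)}=\tau_3^{(3)}=\tau_4^{(3)}=0\}$ produced in the previous theorem, say $\gamma_1^{(3)}$, for which it was already verified that $\tau_5^{(3)}|_{\gamma_1^{(3)}}\neq0$; (ii) evaluate the above $4\times4$ Jacobian determinant at $\gamma_1^{(3)}$ with \textsc{Mathematica} and check that it is nonzero; (iii) conclude from the displayed sufficient condition that exactly $4$ critical periods bifurcate from the weak center corresponding to $\gamma_1^{(3)}$. The remaining fifteen points $\gamma_2^{(3)},\dots,\gamma_{16}^{(3)}$ are treated in the same way, so that in fact the conclusion holds at every point of $\Gamma_3$.

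The main obstacle is step (ii). The polynomials $f_4,f_5,f_6$ defining $\tau_3^{(3)},\tau_4^{(3)},\tau_5^{(3)}$ are far too large for the variety $\{\tau_1^{(3)}=\cdots=\tau_4^{(3)}=0\}$ to be resolved by purely symbolic elimination, so both the solutions $\gamma_k^{(3)}$ and the Jacobian evaluations must be obtained numerically, to high precision. One must then argue that the computed values of $\tau_1^{(3)},\dots,\tau_4^{(3)}$ at $\gamma_k^{(3)}$ are genuinely zero --- they come out of order $10^{-40}$ or smaller --- while $\tau_5^{(3)}$ and the $4\times4$ Jacobian are of order $1$, so that round-off cannot affect the nonvanishing conclusions. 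This is exactly the numerical caveat already flagged in the proof of the $\Lambda_1$ case, and the same remark applies here.
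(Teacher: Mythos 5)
Your proposal is correct and follows essentially the same route as the paper: the upper bound comes from the weak-center order established in the preceding theorem, and the lower bound is obtained by numerically verifying that the $4\times4$ Jacobian determinant $\det\bigl[\partial(\tau_1^{(3)},\tau_2^{(3)},\tau_3^{(3)},\tau_4^{(3)})/\partial(a_3,a_7,a_2,a_6)\bigr]$ is nonzero at $\gamma_1^{(3)}$ (the paper finds the value $793811.919\cdots$), with the remaining fifteen points of $\Gamma_3$ treated analogously. You merely make explicit the Chicone--Jacobs transversality argument and the numerical-precision caveat that the paper leaves implicit.
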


\begin{proof}
Checking the Jacobian matrix for this case, we see that
\begin{equation}
\text{det}\left.\left[\frac{\partial(\tau_1^{(3)},\tau_2^{(3)},\tau_3^{(3)},\tau_4^{(3)})}{\partial(a_3,a_7,a_2,a_6)}\right]\right|_{\gamma_1^{(3)}}=793811.91919002470497313053176662032132480\cdots\neq0.
\end{equation}
The remaining fifteen cases are analogous.
\end{proof}

\subsection{Center of type $\Lambda_4$}

Substituting $\Lambda_4$ into formulae \eqref{2.15}-\eqref{2.16}, we compute the first four complex period constants at the origin of system \eqref{3.7} as follows:
\begin{equation}\begin{array}{l}
\tau_1^{(4)}=\frac{1}{12}(-16-10a_2^2-9a_3+4a_2a_6-16a_6^2),\\
\tau_2^{(4)}=\frac{1}{108}(-256+280a_2^2+140a_2^4-189a_2a_4+896a_2a_6+140a_2^3a_6-162a_4a_6-512a_6^2\\
\ \ \ \ \ \ \ \ \ \ \ \ \ \ +72a_2^2a_6^2+896a_2a_6^3-256a_6^4),\\
\tau_3^{(4)}=\frac{1}{25920}(32768+6259328a_2^2-7443240a_2^4-3637620a_2^6+226800a_2a_4+4896927a_2^3a_4\\
\ \ \ \ \ \ \ \ \ \ \ \ \ \ \ \ -20412a_4^2+993280a_2a_6-25316928a_2^3a_6-4553220a_2^5a_6+284256a_4a_6\\
\ \ \ \ \ \ \ \ \ \ \ \ \ \ \ \ +5352966a_2^2a_4a_6+32768a_6^2+6009216a_2^2a_6^2-3151896a_2^4a_6^2+1432512a_2a_4a_6^2\\
\ \ \ \ \ \ \ \ \ \ \ \ \ \ \ \ +1001472a_2a_6^3-24025728a_2^3a_6^3+495360a_4a_6^3),\\
\tau_4^{(4)}=\frac{1}{202078616107242240}f_7(a_3,a_6,a_2,a_4).
\end{array}\end{equation}
In the above expression of $\tau_k^{(4)}$s, we have already let $\tau_1^{(4)}=\cdots=\tau_{k-1}^{(4)}=0,k=2,3,4$.

\begin{theorem}
Denote $\gamma^{(4)}=(a_3,a_6,a_2,a_4)\in\mathbb{R}^4$. For system \eqref{3.4}, if $\lambda\in\Lambda_4$ the bi-center is a weak center of at most 3. Moreover, it is a the third order weak center if and only if

$\Gamma_4=\left\{\gamma^{(4)}\in\mathbb{R}^4\left|\tau_1^{(4)}=\tau_2^{(4)}=\tau_3^{(4)}=0\right.\right\}$.
\end{theorem}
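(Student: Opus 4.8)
The plan is to mirror the structure of the proofs given for the cases $\Lambda_1,\Lambda_2,\Lambda_3$, adapting it to the four-parameter vector $\gamma^{(4)}=(a_3,a_6,a_2,a_4)$ and the period constants $\tau_1^{(4)},\tau_2^{(4)},\tau_3^{(4)},\tau_4^{(4)}$ listed above. First I would establish the upper bound: the bi-center can be a weak center of order at most $3$ when $\lambda\in\Lambda_4$. For this it suffices to show that the system of polynomial equations $\tau_1^{(4)}=\tau_2^{(4)}=\tau_3^{(4)}=\tau_4^{(4)}=0$ has no common real solution. Exactly as in the proof for $\Lambda_1$, I would eliminate variables one at a time using \texttt{Resultant}: compute $R_{1,k}^{(4)}=\texttt{Resultant}[\tau_1^{(4)},\tau_k^{(4)},a_3]$ for $k=2,3,4$ to remove $a_3$, then take resultants of the resulting polynomials with respect to $a_6$, then with respect to $a_2$, arriving at a univariate polynomial in $a_4$. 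Running \texttt{NSolve} on this polynomial and checking that all its roots are non-real (or that none of them lifts back to a real solution of the full system) yields the claim that $\tau_1^{(4)},\dots,\tau_4^{(4)}$ share no common real zero, hence the order is at most $3$.

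Next I would prove that the third order is actually attained precisely on $\Gamma_4$. By definition $\Gamma_4=\{\gamma^{(4)}:\tau_1^{(4)}=\tau_2^{(4)}=\tau_3^{(4)}=0\}$, and on this set $\tau_4^{(4)}\neq0$ automatically because, as just shown, the four constants have no common real zero; this gives the "only if" direction, while the "if" direction is immediate from the definition of a third-order weak center. To make the statement non-vacuous I would exhibit an explicit point of $\Gamma_4$: fixing a convenient value such as $a_4=0$ and solving $\tau_1^{(4)}=\tau_2^{(4)}=\tau_3^{(4)}=0$ numerically for $(a_3,a_6,a_2)$ with \texttt{NSolve[\dots,Reals]}, then substituting the resulting numerical triple back into $\tau_1^{(4)},\tau_2^{(4)},\tau_3^{(4)},\tau_4^{(4)}$ to verify that the first three vanish (up to numerical error) while $\tau_4^{(4)}\neq0$, thereby confirming $\Gamma_4\neq\varnothing$.

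Finally, although the theorem as stated only asserts the order-$3$ characterization, its natural companion (paralleling the "bifurcation of critical periods" theorems in the other three cases) is obtained by a rank argument: at a point of $\Gamma_4$ one computes the Jacobian $\partial(\tau_1^{(4)},\tau_2^{(4)},\tau_3^{(4)})/\partial(a_3,a_6,a_2)$ and checks numerically that its determinant is nonzero there. Nonvanishing of this Jacobian, together with $\tau_4^{(4)}\neq0$ on $\Gamma_4$, lets one invoke the standard Chicone--Jacobs/Bautin-type argument (as used earlier in the excerpt) to conclude that exactly $3$ critical periods bifurcate from the bi-center under a suitable perturbation.

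The main obstacle is purely computational: the polynomials $\tau_3^{(4)}$ and especially $\tau_4^{(4)}=f_7(a_3,a_6,a_2,a_4)/202078616107242240$ are large, so the iterated resultants will have enormous integer coefficients (the analogous step in the $\Lambda_1$ case produced a constant with $1420$ digits and a degree-$192$ factor), and one must be careful that spurious factors introduced by the resultant operation — roots of leading coefficients, common factors shared by several $\tau_k^{(4)}$ — are correctly discarded before concluding "no common real root." Controlling this elimination, and certifying the numerical \texttt{NSolve} output rigorously enough to trust the emptiness claim, is where essentially all the work lies; the rest is routine verification.
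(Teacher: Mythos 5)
Your proposal matches the paper's proof in all essentials: the same iterated-resultant elimination in the same variable order ($a_3$, then $a_6$, then $a_2$, leaving a univariate polynomial in $a_4$), followed by a real-root check; and your parenthetical about lifting real roots of the final resultant back to the full system is exactly what the paper must do here, since $a_4=0$ is a real root that forces $a_2=0$ and then $\tau_2^{(4)}|_{a_2=a_4=0}=-\tfrac{64}{27}(1+a_6^2)^2\neq0$. The extra steps you sketch (exhibiting a point of $\Gamma_4$ and the Jacobian rank argument) go slightly beyond what the paper records for this theorem but are consistent with its treatment of the companion critical-period result.
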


\begin{proof}
First of all, computing the resultant of $\tau_1^{(4)},\tau_2^{(4)},\tau_3^{(4)},\tau_4^{(4)}$ with respect to $a_3$, we have
\begin{equation}\begin{array}{l}
R_{1,2}^{(4)}=\texttt{Resultant[}\tau_1^{(4)},\tau_2^{(4)},a_3\texttt{]}=\frac{1}{108}F_{1,2}^{(4)}(a_6,a_2,a_4),\\
R_{1,3}^{(4)}=\texttt{Resultant[}\tau_1^{(4)},\tau_3^{(4)},a_3\texttt{]}=\frac{1}{25920}F_{1,3}^{(4)}(a_6,a_2,a_4),\\
R_{1,4}^{(4)}=\texttt{Resultant[}\tau_1^{(4)},\tau_4^{(4)},a_3\texttt{]}=\frac{1}{202078616107242240}F_{1,4}^{(4)}(a_6,a_2,a_4).
\end{array}\end{equation}
Next, computing the resultant of $R_{1,2}^{(4)},R_{1,3}^{(4)},R_{1,4}^{(4)}$ with respect to $a_6$, we have
\begin{equation}\begin{array}{l}
R_{12,13}^{(4)}=\texttt{Resultant[}R_{1,2}^{(4)},R_{1,3}^{(4)},a_6\texttt{]}=\frac{1}{185961834720000}F_{12,13}^{(4)}(a_2,a_4),\\
R_{12,14}^{(4)}=\texttt{Resultant[}R_{1,2}^{(4)},R_{1,4}^{(4)},a_6\texttt{]}=\frac{F_{12,14}^{(4)}(a_2,a_4)}{4006662718879818705145403830481966659700018498572980313175832330240000}.
\end{array}\end{equation}
Finally, computing the resultant of $R_{12,13}^{(4)},R_{12,14}^{(4)}$ with respect to $a_2$, we have
\begin{equation}\begin{array}{l}
R_{1213,1214}^{(4)}=\texttt{Resultant[}R_{12,13}^{(4)},R_{12,14}^{(4)},a_2\texttt{]}=\frac{7730993719707444524137094407a_4^{26}(1+16a_4^2)F_{1213,1214}^{(4)}(a_4)G_{1213,1214}^{(4)}(a_4)}{N_{1663}},
\end{array}\end{equation}
where $F_{1213,1214}^{(4)}(a_4)$ and $G_{1213,1214}^{(4)}(a_4)$ are unary polynomial in $a_4$ of degree $46$ and $182$, respectively.

Performing the \textsc{Mathematica} routine $\texttt{NSolve[}R_{1213,1214}^{(4)}==0,a_4\texttt{]}$ only gives one real solution $a_4=0$, under which we have
\begin{equation}\label{3.51}\begin{array}{l}
R_{12,13}^{(4)}|_{a_4=0}=-\frac{1}{1452826833750}a_2^6(1+a_2^2)(16+9a_2^2)(2199023255552+3148960642367488a_2^2\\
\ \ \ \ \ \ \ \ \ \ \ \ \ \ \ \ \ \ \ \ \ \ \ \ \ \ \ \ \ \ \ \ \ \ \ \ +48607541078261760a_2^4+264426711058227200a_2^6\\
\ \ \ \ \ \ \ \ \ \ \ \ \ \ \ \ \ \ \ \ \ \ \ \ \ \ \ \ \ \ \ \ \ \ \ \ +692618123639590400a_2^8+983111352623484000a_2^{10}\\
\ \ \ \ \ \ \ \ \ \ \ \ \ \ \ \ \ \ \ \ \ \ \ \ \ \ \ \ \ \ \ \ \ \ \ \ +784228248389171250a_2^{12}+294029898225170625a_2^{14}),
\end{array}\end{equation}
which has only one real root $a_2=0$. Then we compute
\begin{equation}\label{3.52}
\tau_2^{(4)}|_{a_2=a_4=0}=-\frac{64}{27}(1+a_6^2)^2\neq0,
\end{equation}
which means $\tau_1^{(4)},\tau_2^{(4)},\tau_3^{(4)},\tau_4^{(4)}$ can not be simultaneous zero. Therefore, the bi-center of system \eqref{3.4} is at most the third order weak center.
\end{proof}

\begin{theorem}
For system \eqref{3.4}, if $\lambda\in\Lambda_4$, the maximum number of bifurcation of critical periods is 3, and there are exactly 3 bifurcation of critical periods after a suitable perturbation.
\end{theorem}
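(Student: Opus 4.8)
The plan is to follow the two-step pattern used in the three preceding cases: first bound the number of bifurcating critical periods from above via the order of the weak bi-center, then realize that bound by an explicit independence argument. The upper bound is immediate. The previous theorem asserts that for $\lambda\in\Lambda_4$ the bi-center is a weak center of order at most $3$, and by the classical result of [Chicone \& Jacobs, 1989] and [Cherkas et al, 1997] at most $k$ local critical periods can bifurcate from a weak center of order $k$. Hence no perturbation can produce more than $3$ critical periods, and the real content of the theorem is the realization of exactly $3$.

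For the realization I would show that $\tau_1^{(4)},\tau_2^{(4)},\tau_3^{(4)}$ are functionally independent at a suitable point of $\Gamma_4$. Concretely: (i) fix a convenient slice value of one parameter, e.g. $a_4=a_4^{*}=0$ (as in the $\Lambda_1$ case this keeps the polynomials of smallest size); (ii) use the \textsc{Mathematica} routine \texttt{NSolve} on the system $\tau_1^{(4)}=\tau_2^{(4)}=\tau_3^{(4)}=0$ over the reals in the variables $a_3,a_6,a_2$ to extract an explicit high-precision real triple, giving a point $\gamma_{*}=(a_3^{*},a_6^{*},a_2^{*},a_4^{*})\in\Gamma_4$ at which, by the very definition of $\Gamma_4$, one has $\tau_4^{(4)}|_{\gamma_*}\neq0$; (iii) form the symbolic determinant
\[
J^{(4)}=\det\left[\frac{\partial(\tau_1^{(4)},\tau_2^{(4)},\tau_3^{(4)})}{\partial(a_3,a_6,a_2)}\right]
\]
and verify numerically that $J^{(4)}|_{\gamma_*}\neq0$. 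Granting these two nonvanishing facts, the map $(a_3,a_6,a_2)\mapsto(\tau_1^{(4)},\tau_2^{(4)},\tau_3^{(4)})$ is a local diffeomorphism near $\gamma_*$, so the three period constants can be driven, one at a time, to small prescribed values of alternating sign while $\tau_4^{(4)}$ stays bounded away from $0$; the standard Bautin-type counting argument (see [Chicone \& Jacobs, 1989]) then forces $P'(h,\lambda_1)=0$ to have exactly three simple zeros in some interval $(0,\varepsilon)$ for a parameter $\lambda_1$ arbitrarily close to the one attached to $\gamma_*$. This yields exactly $3$ bifurcating critical periods.

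As in the $\Lambda_1$ case I would also record a numerical sanity check: substituting the computed $\gamma_*$ into the displayed expressions for the $\tau_k^{(4)}$ and into $J^{(4)}$ should give residuals $\tau_i^{(4)}|_{\gamma_*}\approx0$ ($i=1,2,3$) of the order of the working precision, together with $\tau_4^{(4)}|_{\gamma_*}\neq0$ and $J^{(4)}|_{\gamma_*}\neq0$, the nonzero residuals being a rounding artifact that does not affect the conclusion. The main obstacle is computational rather than conceptual: $J^{(4)}$ and the resultant-type polynomials needed to certify that $\Gamma_4$ is nonempty carry gigantic integer coefficients, so the delicate points are choosing the elimination order and the slice value $a_4^{*}$ so that \texttt{NSolve} actually terminates with a trustworthy real root, and then being confident that the returned point genuinely lies on $\Gamma_4$ rather than merely near it.
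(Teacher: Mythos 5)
Your overall two-step strategy (upper bound from the weak-center order, realization via a nonvanishing Jacobian plus the Bautin-type perturbation argument) is sound, but the concrete instantiation has a genuine failure: the slice $a_4=a_4^{*}=0$ that you borrow from the $\Lambda_1$ case contains \emph{no} points of $\Gamma_4$. Indeed, by the paper's own elimination in the preceding theorem, on $a_4=0$ the necessary condition $R_{12,13}^{(4)}|_{a_4=0}=0$ forces $a_2=0$, and then $\tau_2^{(4)}|_{a_2=a_4=0}=-\tfrac{64}{27}(1+a_6^2)^2\neq 0$; so $\tau_1^{(4)}=\tau_2^{(4)}=\tau_3^{(4)}=0$ has no real solution with $a_4=0$, and your step (ii) would return the empty set. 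You would have to choose some $a_4^{*}\neq 0$ and separately certify that $\Gamma_4$ actually meets that slice (equivalently, that $\Gamma_4\neq\emptyset$), which your proposal takes for granted.

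The paper also proceeds differently, and in a way that sidesteps the sample-point issue entirely: it forms $G(a_6,a_2,a_4)$, the numerator of $\det\bigl[\partial(\tau_1^{(4)},\tau_2^{(4)},\tau_3^{(4)})/\partial(a_3,a_6,a_2)\bigr]$, and runs the same resultant chain ($a_3$, then $a_6$, then $a_2$) on the system $\{\tau_1^{(4)}=\tau_2^{(4)}=\tau_3^{(4)}=0,\ G=0\}$, finding that the only candidate real solution again collapses to $a_4=0$, $a_2=0$, which contradicts $\tau_2^{(4)}=0$. This proves the Jacobian determinant is nonzero at \emph{every} point of $\Gamma_4$, so the transversality needed for three critical periods holds for every third-order weak bi-center in this family, not merely at one numerically located parameter value. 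Your approach, once repaired with a valid slice, would still prove the theorem as stated (an existence claim), but it yields strictly less information and rests on trusting that an \texttt{NSolve} output lies on $\Gamma_4$ rather than merely near it; the paper's resultant argument avoids that numerical trust issue for the Jacobian nonvanishing, at the cost of manipulating the very large polynomials $S_{1,4}$, $S_{12,14}$, $S_{1213,1214}$.
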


\begin{proof}
The process is similar to the proof of the former theorem. The determinant of Jacobian matrix in this case is computed as
\begin{equation}
\text{det}\left[\frac{\partial(\tau_1^{(4)},\tau_2^{(4)},\tau_3^{(4)})}{\partial(a_3,a_6,a_2)}\right]=\frac{1}{311040}G(a_6,a_2,a_4).
\end{equation}
First of all, computing the resultant of $\tau_1^{(4)},G(a_6,a_2,a_4)$ with respect to $a_3$, we have
\begin{equation}
S_{1,4}=\texttt{Resultant[}\tau_1^{(4)},G(a_6,a_2,a_4),a_3\texttt{]}=\frac{1}{311040}G_{1,4}(a_6,a_2,a_4).
\end{equation}
Next, computing the resultant of $R_{1,2}^{(4)},S_{1,4}$ with respect to $a_6$, we have
\begin{equation}
S_{12,14}=\texttt{Resultant[}R_{1,2}^{(4)},S_{1,4},a_6\texttt{]}=-\frac{4}{1158137618032400625}G_{12,14}(a_2,a_4).
\end{equation}
Finally, computing the resultant of $R_{12,13}^{(4)},S_{12,14}$ with respect to $a_2$, we have
\begin{equation}
S_{1213,1214}=\texttt{Resultant[}R_{12,13}^{(4)},S_{12,14},a_2\texttt{]}=\frac{N_{51}}{N_{733}}a_4^{12}P_{1213,1214}(a_4)Q_{1213,1214}(a_4),
\end{equation}
where $P_{1213,1214}(a_4)$ and $Q_{1213,1214}(a_4)$ are unary polynomial in $a_4$ of degree 60 and 184, respectively.

Performing the \textsc{Mathematica} routine $\texttt{NSolve[}S_{1213,1214}==0,a_4\texttt{]}$ only gives one real solution $a_4=0$, under which we again get \eqref{3.51} and further \eqref{3.52}. Thus, we have
\begin{equation}
\text{det}\left.\left[\frac{\partial(\tau_1^{(4)},\tau_2^{(4)},\tau_3^{(4)})}{\partial(a_3,a_6,a_2)}\right]\right|_{\Gamma_4}\neq0.
\end{equation}
\end{proof}

To conclude, the main result is as follows:

\begin{theorem}
For system \eqref{3.4}, the bi-center is a weak center of order 3 (resp. 4) if and only if $\lambda\in\Lambda_1^{(3)}\cup\Lambda_2^{(3)}$ (resp. $\lambda\in\Lambda_1^{(4)}\cup\Lambda_2^{(4)}$), and there are exactly 3 (resp. 4) bifurcation of critical periods after a suitable perturbation, where

$\Lambda_1^{(3)}=\left\{\lambda\in\mathbb{R}^8\left|\lambda\in\Lambda_1,\gamma^{(1)}\in\Gamma_1\right.\right\}$;

$\Lambda_2^{(3)}=\left\{\lambda\in\mathbb{R}^8\left|\lambda\in\Lambda_4,\gamma^{(4)}\in\Gamma_4\right.\right\}$;

$\Lambda_1^{(4)}=\left\{\lambda\in\mathbb{R}^8\left|\lambda\in\Lambda_2,\gamma^{(2)}\in\Gamma_2\right.\right\}$;

$\Lambda_2^{(4)}=\left\{\lambda\in\mathbb{R}^8\left|\lambda\in\Lambda_3,\gamma^{(3)}\in\Gamma_3\right.\right\}$.
\end{theorem}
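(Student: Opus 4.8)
The plan is to \emph{assemble} this theorem directly from the four case analyses carried out in Subsections 3.1--3.4, since it is essentially a bookkeeping consolidation of Theorems stated there. The logical skeleton is: (i) by the $Z_2$-equivariance of \eqref{3.1}, a weak bi-center at $(\pm1,0)$ is equivalent to a weak center at the origin of the translated complex system \eqref{3.7}; (ii) by the bi-center classification of \cite{Romanovski2017}, the bi-center conditions are exactly $\lambda\in\Lambda_1\cup\Lambda_2\cup\Lambda_3\cup\Lambda_4$, so the weak-center order can only be analyzed on these four strata; (iii) on each stratum the order of the weak center is governed by the vanishing/non-vanishing pattern of the complex period constants $\tau_k^{(i)}$, via the relation $p_{2k}=-\pi\tau_k$ recalled at the end of Section~2. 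Thus I would begin the proof by restating these three reductions in one or two sentences, citing the earlier results, so that the rest of the argument lives entirely inside the restricted parameter sets $\gamma^{(i)}\in\mathbb{R}^4$.

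Next I would treat the four strata one at a time, invoking the per-case theorems already proved. For $\Lambda_1$: Theorem on $\Lambda_1$ shows the order is at most $3$ and equals $3$ exactly on $\Gamma_1$, with $3$ critical periods bifurcating; this contributes $\Lambda_1^{(3)}$ to the order-$3$ case. For $\Lambda_4$: analogously, order at most $3$, equal to $3$ exactly on $\Gamma_4$, giving $\Lambda_2^{(3)}$. For $\Lambda_2$: order at most $4$, equal to $4$ exactly on $\Gamma_2$, giving $\Lambda_1^{(4)}$. For $\Lambda_3$: order at most $4$, equal to $4$ exactly on $\Gamma_3$, giving $\Lambda_2^{(4)}$. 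Taking unions, the order equals $3$ iff $\lambda\in\Lambda_1^{(3)}\cup\Lambda_2^{(3)}$ and equals $4$ iff $\lambda\in\Lambda_1^{(4)}\cup\Lambda_2^{(4)}$, which is exactly the claimed statement. For the bifurcation count, in each case I would recall that the Jacobian of $(\tau_1^{(i)},\dots,\tau_{k}^{(i)})$ with respect to the corresponding four (or three) real parameters is non-singular at the relevant points --- this is precisely the content of the ``exactly $k$ bifurcation of critical periods'' theorems above --- so that by the standard implicit-function / Chicone--Jacobs argument (the $\tau_k^{(i)}$ form an independent set near $\Gamma_i$, hence admit independent sign perturbations) exactly $k$ critical periods appear under a suitable small perturbation, and no more can appear because $\tau_{k+1}^{(i)}\neq0$ there.

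One subtlety that must be addressed explicitly is \emph{why the four strata together exhaust the weak-bi-center locus and why orders larger than $4$ or smaller than the stated maxima do not arise}. The ``at most'' bounds come from the resultant computations (for $\Lambda_1$ and $\Lambda_4$) and the \texttt{NSolve} computations (for $\Lambda_2$ and $\Lambda_3$) showing $\tau_1^{(i)}=\cdots=\tau_{k+1}^{(i)}=0$ has no admissible real solution; these are already established, so I would simply cite them. I should also note that the sets $\Lambda_j^{(3)}$ and $\Lambda_j^{(4)}$ are pairwise disjoint and that centers of order $<3$ (strong, first-, or second-order weak centers) occupy the complementary subsets of each $\Lambda_i$ --- but since the theorem only asserts the characterizations for orders $3$ and $4$, I would keep this remark brief. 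The \textbf{main obstacle} is not mathematical depth but \emph{verification of the symbolic/numeric computations underpinning the four case theorems}: one must trust (or independently recompute in \textsc{Mathematica}) the resultant factorizations, the claimed degrees of the univariate factors, the reality counts from \texttt{NSolve}, and the non-vanishing of the Jacobian determinants at the listed points. Since the final theorem merely glues these together, once the four subsection theorems are accepted the proof is immediate; I would therefore write it as a short paragraph of the form ``The assertion follows by combining Theorems~[$\Lambda_1$], [$\Lambda_2$], [$\Lambda_3$], [$\Lambda_4$] with the relation $p_{2k}=-\pi\tau_k$ and the $Z_2$-equivariance of \eqref{3.1}; the labelling $\Lambda_1^{(3)},\Lambda_2^{(3)},\Lambda_1^{(4)},\Lambda_2^{(4)}$ records which stratum yields which order.''
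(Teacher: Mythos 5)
Your proposal is correct and matches the paper's approach: the paper gives no separate argument for this concluding theorem, presenting it as an immediate consolidation of the four subsection theorems (orders at most $3$ on $\Lambda_1,\Lambda_4$ attained exactly on $\Gamma_1,\Gamma_4$; orders at most $4$ on $\Lambda_2,\Lambda_3$ attained exactly on $\Gamma_2,\Gamma_3$; nonvanishing Jacobians giving the bifurcation counts), which is precisely the assembly you describe.
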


\section{Conclusion}

In this paper, we address the problem of the order of weak centers and maximum number of critical periods bifurcating
from the bi-center $(\pm1,0)$ in system \eqref{3.4} as the parameter varies and solve it completely. We have the following:
\begin{itemize}
\item The bi-center is of finite order.

\item The result obtained in [Romanovski et al, 2017] that the family has no isochronous bi-center is verified.

\item At most four critical periods can bifurcate from the bi-center.

\item To identify to maximum order of weak bi-center and give the maximum number of critical periods, we reduce the question to that of transversal intersections of some algebraic surfaces, using computational algebraic geometry techniques such as the Theory of Resultant and Gr\"{o}bner Bases.

\end{itemize}

Throughout the paper some polynomials $f_i,i=1,2,3,4,5,6,7$ are not given explicit expressions, because they are very large. In fact, the readers can easily compute them using any available computer algebra system. However, these polynomials are available via the following E-mail address: \textsf{wuyusen621@126.com}.

\section*{Acknowledgements}

The author of this research article express his profound gratitude and sincerest thanks to the anonymous reviewers, for their constructive opinions towards making this research article scientifically sound.

\section*{Appendix}

The process of computing and simplifying $\tau_1^{(1)}$ and $\tau_2^{(1)}$ in \eqref{3.9}:

Substituting \eqref{3.8} into \eqref{2.15} and \eqref{2.16}, we get the recursive formulae of $c'[k,j],d'[k,j],p'[j],q'[j]$ and $\tau[j]=p'[j]+q'[j]$, the \textsc{Mathematica} codes are as follows:
\begin{figure}[!hbt]
\flushleft
\includegraphics[height=1.3cm,width=3.8cm]{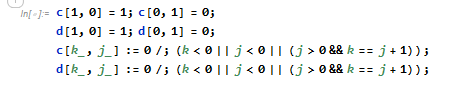}
\end{figure}

\begin{figure}[!ht]
\centering
\includegraphics[height=6.6cm,width=18cm]{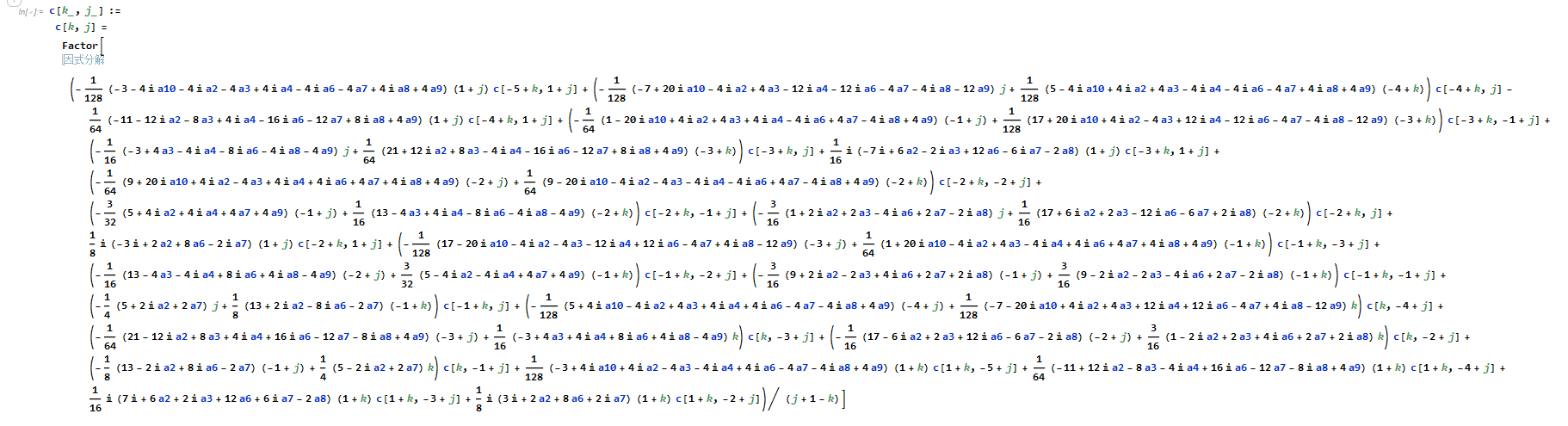}
\end{figure}

\begin{figure}[!ht]
\centering
\includegraphics[height=6.6cm,width=18cm]{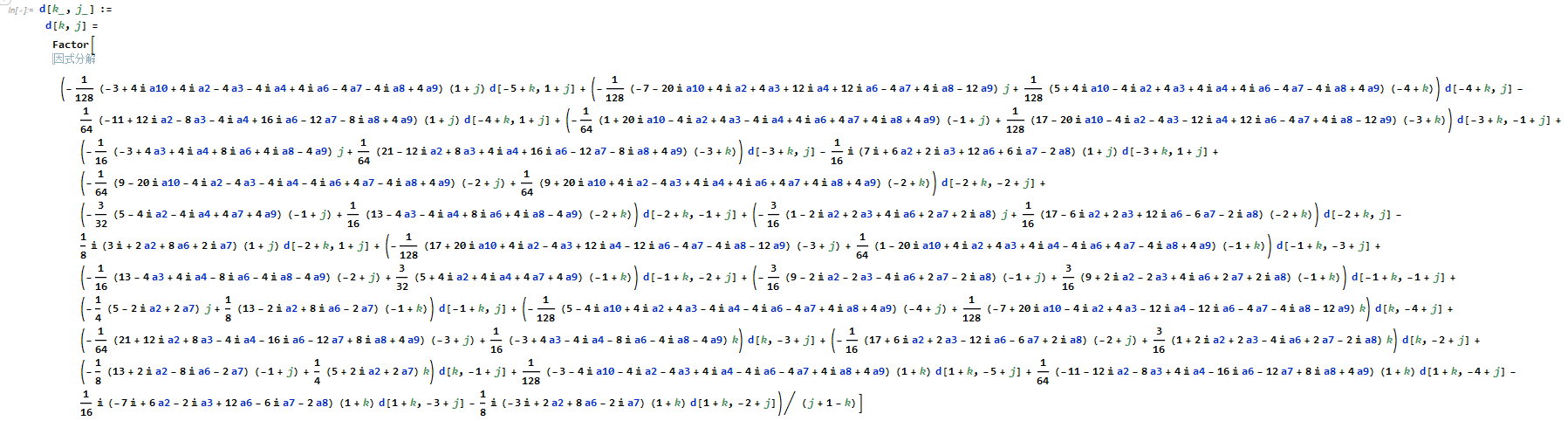}
\end{figure}

\newpage

\begin{figure}[!ht]
\centering
\includegraphics[height=12cm,width=18cm]{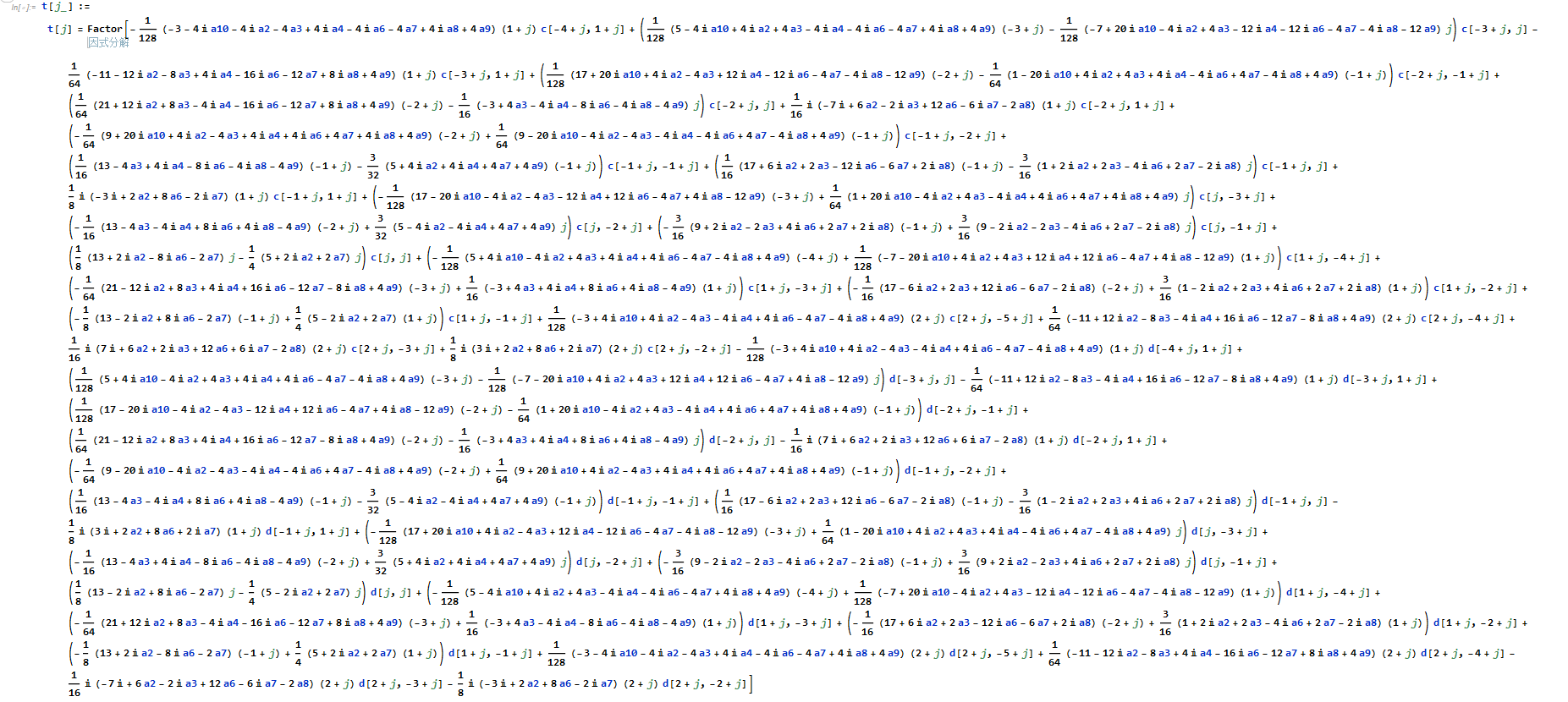}
\end{figure}

Setting $a_6=0,a_8=\frac{1}{3}a_2(1-2a_7),a_9=\frac{1}{2}a_3(1-a_7),a_{10}=\frac{1}{5}a_4(3-2a_7)$ and executing the foregoing \textsc{Mathematica} codes, we have

$\tau_1^{(1)}=\frac{1}{12}(-48-10a_2^2-9a_3-36a_7-4a_7^2),$

$\begin{array}{l}
\tau_2^{(1)}=\frac{1}{384}(-5184-824a_2^2-260a_2^4-972a_3-852a_2^2a_3-153a_3^2-672a_2a_4-2160a_7+392a_2^2a_7\\
\ \ \ \ \ \ \ \ \ \ \ \ \ \ +180a_3a_7+3376a_7^2+608a_2^2a_7^2+528a_3a_7^2+2096a_7^3+208a_7^4),
\end{array}$

\noindent Let

$k_{21}=\frac{1}{288}(156+682a_2^2+153a_3-792a_7-596a_7^2),$

\noindent and

$\tau_2^{(1)}\to\tau_2^{(1)}-k_{2,1}\tau_1^{(1)},$

\noindent then

$\tau_2^{(1)}=\frac{1}{108}(-1224+840a_2^2+140a_2^4-189a_2a_4-1620a_7+630a_2^2a_7-816a_7^2+70a_2^2a_7^2-180a_7^3-16a_7^4).$

\end{document}